\documentclass[reqno]{amsart}
\usepackage{amsmath, amsthm, amssymb, amstext}

\usepackage{hyperref,xcolor}
\hypersetup{
  pdfborder={0 0 0},
  colorlinks,
}
\usepackage{enumitem}
\setlength{\parindent}{1.2em}







\newtheorem{theorem}{Theorem}

\newtheorem{lemma}[theorem]{Lemma}
\newtheorem{proposition}[theorem]{Proposition}

\newtheorem{definition}[theorem]{Definition}

\newtheorem{remark}[theorem]{Remark}


              %
              %
              %
              %
            %
          %
          %
         %
         %
         %
         %
         %

\newcommand{\N}{\mathbb{N}}

\newcommand{\R}{\mathbb{R}}

\newcommand{\eps}{\varepsilon}
\newcommand{\ph}{\varphi}

\newcommand{\weak}{\rightharpoonup}

\newcommand{\cprime}{$'$}

\numberwithin{theorem}{section}
\numberwithin{equation}{section}


\title[Nonlinear problems in $\R^N$ involving the fractional Laplacian]{Bounded solutions to nonlinear problems in $\R^N$ involving the fractional Laplacian depending on parameters}

\author[S.\,El Manouni]{Said El Manouni}
\address[S.\,El Manouni]{Technische Universit\"{a}t Berlin, Institut f\"{u}r Mathematik, Stra\ss e des 17.\,Juni 136, 10623 Berlin, Germany}
\email{manouni@math.tu-berlin.de}

\author[H.\,Hajaiej]{Hichem Hajaiej}
\address[H.\,Hajaiej]{New York University, Shanghai, 1555 Centry Avenue, Pudong New District, Shanghai 200122, China}
\email{hichem.hajaiej@gmail.com}

\author[P.\,Winkert]{Patrick Winkert}
\address[P.\,Winkert]{Technische Universit\"{a}t Berlin, Institut f\"{u}r Mathematik, Stra\ss e des 17.\,Juni 136, 10623 Berlin, Germany}
\email{winkert@math.tu-berlin.de}

\subjclass[2010]{35R11, 35J20, 35J60}
\keywords{Fractional Laplacian, Nonlocal eigenvalue problems, unbounded domains, existence and regularity, multiplicity results, Ricceri's principle}

\begin{document}

\begin{abstract}
    The main goal of this paper is the study of two kinds of nonlinear problems depending on parameters in unbounded domains. Using a nonstandard variational approach, we first prove the existence of bounded solutions for nonlinear eigenvalue problems involving the fractional Laplace operator $(-\Delta )^s$ and nonlinearities that have subcritical growth. In the second part, based on a variational principle of Ricceri \cite{Ricceri-2009}, we study a fractional nonlinear problem with two parameters and prove the existence of multiple solutions.
\end{abstract}

\maketitle

\section{Introduction}

In this paper, we are interested in the study of two nonlocal problems involving an integro-differential operator of fractional type. More precisely, the fractional Laplacian in $\R^N $ as a nonlocal generalization of the Laplace operator denoted by $( -\Delta)^s$ and defined pointwise for $x\in \R^N$ up to normalization factors by
\begin{equation}\label{e1.1}
    (-\Delta)^s u(x)=-\frac{1}{2}\int_{\R^N}\frac{u(x+y)+u(x-y)-2u(x)}{|y|^{N+2s}}dy,
\end{equation}

takes center in our considerations in terms of two nonlocal problems involving one, respectively two positive parameters. Let us mention that in recent years much attention has been devoted to the study of nonlocal problems of elliptic type and a lot of papers have appeared both in bounded and unbounded domains, see, for example Autuori-Pucci \cite{Autuori-Pucci-2013}, Molica Bisci \cite{Molica-Bisci-2014}, Molica Bisci-R{\u{a}}dulescu \cite{Molica-Bisci-Radulescu-2015}, Lehrer-Maia-Squassina \cite{Lehrer-Maia-Squassina-2015}, Servadei-Valdinoci \cite{Servadei-Valdinoci-2012} the references therein.

Nonlocal operators such as $(-\Delta )^s$  naturally arise in a quite natural way in many different contexts, both for pure mathematical research and in view of concrete real-world applications, such as, for instance, the thin obstacle problem, optimization, finance, phase transitions phenomena, stratified materials, anomalous diffusion, population dynamics and game theory, flame propagation, conservation laws, continuum mechanics, quasi-geostrophic flows, multiple scattering, minimal surfaces, etc. For more details, we refer to Caffarelli \cite{Caffarelli-2009}, Caffarelli-Vazquez \cite{Caffarelli-Vazquez-2011}, Chang-Wang \cite{Chang-Wang-2013}, Cheng \cite{Cheng-2012}, Felmer-Quaas-Tan \cite{Felmer-Quaas-Tan-2012}, Hajaiej \cite{Hajaiej-2013}, Tan-Wang-Yang \cite{Tan-Wang-Yang-2012} and the references therein.

Dealing with such nonlocal problems involving $(-\Delta )^s$ operator requires a particular functional framework. We refer the reader to Di Nezza-Palatucci-Valdinoci \cite{Di-Nezza-Palatucci-Valdinoci-2012} and to the references included for a selfcontained overview of the basic properties of fractional Sobolev spaces.


The purpose of this paper is the study of two nonlocal problems by using two different approaches. In the first part, we consider the eigenvalue problem for the fractional Laplacian in $\R^N$ and using a nonstandard variational procedure we show the existence of at least one solution corresponding to some positive parameter. Additionally, we establish some regularity results for the solution obtaind in terms of global $L^\infty$-estimates by constructing an iteration scheme to bound the maximal norm of the solution following Moser's iteration (see, for example Dr{\'a}bek-Kufner-Nicolosi \cite{Drabek-Kufner-Nicolosi-1997}). The second part concerns a nonlinear problem with two parameters involving the fractional Laplacian. The existence and multiplicity results are proved by applying a variational principle of Ricceri \cite{Ricceri-2009}. Let us note that since the appearance of the abstract result proved by Ricceri in \cite{Ricceri-2000} and its revisited note established in \cite{Ricceri-2009-b} dealing with variational equations in both local and nonlocal settings, they have been widely investigated and have intensively been applied for the study of the existence of multiple nontrivial solutions and in recent years a lot of papers has been appeared in the scalar case and systems of elliptic equations. In \cite{Ricceri-2009}, Ricceri obtained a general three critical points theorem, that has been applied for a class of local integro-differential equations involving a large class of nonlinearities. We will not mention such applications here since the reader can easily access to such literature in this regard. In our context, we are interested in a class of nonlocal integro-differential equations involving two parameters and nonlinearities with subcritical growth in $\R^N$ and prove multiple nontrivial solutions by making use of the variational principle
of Ricceri \cite{Ricceri-2009}.

\section {Preliminaries}
Throughout this paper and before defining the natural framework for such two nonlocal problems involving the fractional Laplace operator \eqref{e1.1}, namely, a fractional Sobolev-type space, we will denote by $\|\cdot\|_p$ the Lebesgue norm in $L^p\left(\R^N\right)$  and $\mathcal{C}_0^\infty\left(\R^N\right)$ will be the space of all functions with compact support in $\R^N$ with continuous derivatives of arbitrary order.

Let us introduce the function space $D^s\left(\R^N\right)$ as the completion of $\mathcal{C}_0^{\infty}({\R}^N)$ with respect to the Gagliardo norm
\begin{align*}
    [u]_s= \left(\ \iint\limits_{\R^{2N}}\frac{|u(x)-u(y)|^2}{|x-y|^{N+2s}}dxdy\right)^{\frac{1}{2}}.
\end{align*}
It can be shown that
\begin{align*}
    D^s\left(\R^N\right)=\left\{ u \in L^{2_s^*}\left(\R^N\right): \frac{|u(x)-u(y)|}{|x-y|^{N/2+s}}\in L^2 \left(\R^N \times \R^N\right) \right\}
\end{align*}
and that
\begin{equation} \label{e2.1}
    \|u\|_{2_s^*}\leq C_{2_s^*} [u]_s \quad \text{for all }u\in D^s\left(\R^N\right),
\end{equation}
where $2_s^*=\frac{2N}{N-2s}$ and $C_{2_s^*}^2=c(N)\frac{s(1-s)}{(N-2s)}$, cf.\,Maz{\cprime}ya-Shaposhnikova \cite[Theorem 1]{Mazya-Shaposhnikova-2002} (see also Bourgain-Brezis-Mironescu \cite[Theorem 1]{Bourgain-Brezis-Mironescu-2002}). Moreover, the space $D^s\left(\R^N\right)$ is a reflexive Banach space continuously embedded into $L^{2_s^*}\left(\R^N\right)$.

The following two lemmas will be used in later considerations, see Brasco-Parini \cite[Lemmas A.1 and A.2]{Brasco-Parini-2014}.
\begin{lemma}\label{Lemma1}
    Let $1<p<\infty$ and $f:\R\to \R$ be a convex function, then
    \begin{align*}
	& |a-b|^{p-2}(a-b)\left[A|f'(a)|^{p-2}f'(a)-B|f'(b)|^{p-2}f'(b)\right]\\
	& \geq |f(a)-f(b)|^{p-2}(f(a)-f(b))(A-B),
    \end{align*}
    for every $a, b\in \R$ and every $A, B\geq 0$.
\end{lemma}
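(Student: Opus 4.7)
The plan is to introduce the auxiliary function $\Psi(t) = |t|^{p-2}t$, which is strictly increasing on $\R$ (since $p-1>0$) and enjoys the multiplicative property $\Psi(xy)=\Psi(x)\Psi(y)$ for all $x,y\in\R$ (the sign of the product matches the product of signs). In this notation the claimed inequality reads
\begin{equation*}
\Psi(a-b)\bigl[A\,\Psi(f'(a)) - B\,\Psi(f'(b))\bigr] \;\geq\; \Psi(f(a)-f(b))(A-B).
\end{equation*}
The case $a=b$ is trivial, so assume $a\neq b$; the case of a merely convex $f$ can be reduced to the differentiable one by using left/right derivatives (which exist everywhere) or by a standard mollification argument.

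First I would establish the two-sided sandwich
\begin{equation*}
\Psi(f'(b))\,\Psi(a-b) \;\leq\; \Psi(f(a)-f(b)) \;\leq\; \Psi(f'(a))\,\Psi(a-b).
\end{equation*}
Convexity of $f$ gives $f'(b)(a-b) \leq f(a)-f(b) \leq f'(a)(a-b)$ when $a>b$, and the reversed inequalities (consistent with $a-b<0$) when $a<b$. In both cases the raw inequality $f'(b)(a-b)\leq f(a)-f(b)\leq f'(a)(a-b)$ holds, so applying the monotone $\Psi$ and then using multiplicativity $\Psi(f'(a)(a-b)) = \Psi(f'(a))\Psi(a-b)$ yields the sandwich.

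Second I would rearrange the target inequality as
\begin{equation*}
A\bigl[\Psi(a-b)\Psi(f'(a)) - \Psi(f(a)-f(b))\bigr] \;\geq\; B\bigl[\Psi(a-b)\Psi(f'(b)) - \Psi(f(a)-f(b))\bigr].
\end{equation*}
By the upper sandwich bound, the bracket on the left is $\geq 0$; by the lower sandwich bound, the bracket on the right is $\leq 0$. Since $A,B\geq 0$, the left side is nonnegative and the right side is nonpositive, which finishes the proof.

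The main obstacle is bookkeeping of signs: $\Psi(a-b)$, $\Psi(f'(a))$, and $\Psi(f(a)-f(b))$ can all be negative, so one must be careful that the multiplicativity $\Psi(xy)=\Psi(x)\Psi(y)$ is applied correctly and that the monotonicity of $\Psi$ is used on genuine real-valued inequalities. The only other mild subtlety is treating points where $f'$ fails to exist, which is harmless because convex functions are differentiable outside a countable set and a subgradient can replace $f'$ in the argument above without any change.
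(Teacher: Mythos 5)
Your proof is correct, and since the paper gives no proof of this lemma at all (it is quoted verbatim from Brasco--Parini \cite{Brasco-Parini-2014}, Lemma A.1), the natural comparison is with the proof in that reference, which yours essentially reproduces: sandwich $f(a)-f(b)$ between $f'(b)(a-b)$ and $f'(a)(a-b)$ via the supporting-line inequalities, apply the increasing map $t\mapsto |t|^{p-2}t$, and conclude by splitting the $A$- and $B$-terms according to sign. The only cosmetic difference is that you exploit the multiplicativity of $\Psi$ to avoid the reduction to the case $a>b$, and your remark that a subgradient can replace $f'$ at the countably many non-differentiability points is a correct and welcome clarification of the statement.
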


\begin{lemma}\label{Lemma2}
    Let $1<p<\infty$ and $g:\R\to \R$ be an increasing function. Define
    \begin{align*}
	G(t)=\int_0^tg'(s)^{\frac{1}{p}}\,ds,\;\;t\in\R.
    \end{align*}
    then we have
    \begin{align*}
	|a-b|^{p-2}(a-b)(g(a)-g(b))\geq |G(a)-G(b)|^{p}.
    \end{align*}
\end{lemma}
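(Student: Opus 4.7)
The plan is to recognize the inequality as Hölder's inequality in disguise, once both sides are rewritten using the fundamental theorem of calculus. Since $G'(s)=g'(s)^{1/p}$ we have $g'(s)=G'(s)^p$, and by the monotonicity of $g$ the derivative $g'$ (which exists almost everywhere) is non-negative, so everything is well-defined.

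First I would reduce to the case $a\geq b$. Both sides of the desired inequality are symmetric under swapping $a$ and $b$: on the right $|G(a)-G(b)|^p$ is obviously symmetric, and on the left $|a-b|^{p-2}(a-b)(g(a)-g(b))$ is the product of two factors that each change sign under the swap, hence is symmetric as well. The case $a=b$ is trivial. So I assume $a>b$, in which case $|a-b|^{p-2}(a-b)=(a-b)^{p-1}$ and, by the fundamental theorem of calculus applied to the absolutely continuous monotone function $g$,
\begin{align*}
g(a)-g(b) &= \int_b^a g'(s)\,ds = \int_b^a G'(s)^p\,ds, \\
G(a)-G(b) &= \int_b^a G'(s)\,ds \geq 0.
\end{align*}
Hence the inequality to establish becomes
\begin{equation*}
(a-b)^{p-1}\int_b^a G'(s)^p\,ds \;\geq\; \left(\int_b^a G'(s)\,ds\right)^{p}.
\end{equation*}

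Now I would apply Hölder's inequality on the interval $[b,a]$ with conjugate exponents $p$ and $p/(p-1)$ to the product $1\cdot G'(s)$:
\begin{equation*}
\int_b^a G'(s)\,ds \;\leq\; \left(\int_b^a 1\,ds\right)^{\!(p-1)/p}\left(\int_b^a G'(s)^p\,ds\right)^{\!1/p} = (a-b)^{(p-1)/p}\left(\int_b^a G'(s)^p\,ds\right)^{\!1/p}.
\end{equation*}
Raising both sides to the power $p$ gives exactly the required estimate, and combining with the reduction above completes the proof.

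There isn't really a hard obstacle here; the only subtlety is making sure the fundamental theorem of calculus is applicable. This is automatic as long as $g$ is absolutely continuous (which is the natural hypothesis implicit in writing $G$ as an integral of $g'^{1/p}$), and in the applications of the lemma $g$ will be smooth enough for this to be immediate.
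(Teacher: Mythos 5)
Your proof is correct, and in fact the paper contains no proof of this lemma at all---it is quoted from Brasco--Parini \cite[Lemma A.2]{Brasco-Parini-2014}, whose argument is precisely your computation: reduce to $a>b$ and apply H\"older (equivalently Jensen) to $\int_b^a g'(s)^{1/p}\,ds$. One refinement: you do not need absolute continuity of $g$, and flagging it as an implicit hypothesis slightly undersells the lemma, which holds for an arbitrary increasing $g$ as stated. Indeed, $G(a)-G(b)=\int_b^a g'(s)^{1/p}\,ds$ is exact by the very definition of $G$ (no fundamental theorem of calculus needed there), while for the other side Lebesgue's differentiation theorem for monotone functions gives $g'\geq 0$ a.e., locally integrable, with the one-sided estimate $\int_b^a g'(s)\,ds\leq g(a)-g(b)$---and this inequality points in exactly the direction your argument requires, since you only use $\int_b^a g'$ as a lower bound for $g(a)-g(b)$. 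With that one-line substitution your proof establishes the lemma in full generality.
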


The subsequent Lemma can be found in Autuori-Pucci \cite{Autuori-Pucci-2013}.
\begin{lemma}\label{Lemma3}
    For any $R>0$ and $p\in \left[1, 2_s^*\right)$ the embedding $D^s(B_R)\hookrightarrow L^p(B_R)$ is compact, where $B_R$ denotes the ball in $\R^N$ of center zero and radius $R>0$.
\end{lemma}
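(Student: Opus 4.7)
The plan is to verify the Fr\'echet--Kolmogorov--Riesz compactness criterion in $L^2(B_R)$ using a translation estimate controlled by the Gagliardo seminorm, and then extend to arbitrary $p\in[1,2_s^*)$ by H\"older's inequality and interpolation against the critical bound \eqref{e2.1}.

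First, I would fix a bounded sequence $(u_n)\subset D^s(B_R)$. The continuous embedding into $L^{2_s^*}$ given by \eqref{e2.1}, together with reflexivity of $D^s$, lets me pass to a subsequence with $u_n\rightharpoonup u$ both in $D^s$ and in $L^{2_s^*}(B_R)$, while $\|u_n\|_{2_s^*}$ stays uniformly bounded.

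Second, I would establish strong convergence in $L^2(B_R)$ via the Fr\'echet--Kolmogorov--Riesz theorem. Tightness is automatic since $B_R$ is bounded, so the only thing to verify is the equicontinuity of translations, which reduces to
\begin{equation*}
    \int_{\R^N}|v(x+h)-v(x)|^2\,dx\leq C(N,s)\,|h|^{2s}\,[v]_s^2,
\end{equation*}
valid for every $v$ in the Gagliardo class and every $|h|\leq 1$. One can deduce it from the Fourier identity $[v]_s^2\sim\int|\hat v(\xi)|^2|\xi|^{2s}\,d\xi$ combined with the pointwise bound $|e^{ih\cdot\xi}-1|^2\leq C|h|^{2s}|\xi|^{2s}$, or directly from the Gagliardo integral by splitting $\R^N$ into $|z|\leq|h|$ and $|z|>|h|$. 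Applied uniformly to the $u_n$, this yields the required modulus of continuity and hence $u_n\to u$ strongly in $L^2(B_R)$.

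Third, I would cover the full range $p\in[1,2_s^*)$. For $p\leq 2$, H\"older on the bounded set $B_R$ gives
\begin{equation*}
    \|u_n-u\|_{L^p(B_R)}\leq |B_R|^{(2-p)/(2p)}\,\|u_n-u\|_{L^2(B_R)}\to 0.
\end{equation*}
For $p\in(2,2_s^*)$, I would choose $\theta\in(0,1)$ with $\tfrac{1}{p}=\tfrac{\theta}{2}+\tfrac{1-\theta}{2_s^*}$ and interpolate
\begin{equation*}
    \|u_n-u\|_{L^p(B_R)}\leq \|u_n-u\|_{L^2(B_R)}^{\theta}\,\|u_n-u\|_{L^{2_s^*}(B_R)}^{1-\theta},
\end{equation*}
in which the first factor vanishes by Step~2 while the second is uniformly bounded by \eqref{e2.1}.

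The main obstacle is the translation estimate in Step~2: it is the single place where the fractional differentiability of $D^s$ is genuinely used, and the constant must be kept uniform across the sequence. The strict inequality $p<2_s^*$ is also essential, since $\theta$ collapses to $0$ at the Sobolev exponent and the interpolation argument can no longer deliver strong convergence; this matches the well-known failure of compactness at the critical exponent.
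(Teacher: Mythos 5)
Your argument is essentially correct, but it follows a genuinely different route from the paper: the paper offers no proof at all for this lemma, quoting it directly from Autuori--Pucci \cite{Autuori-Pucci-2013}, where it is obtained by restriction to $B_R$ (the localized Gagliardo seminorm and, via H\"older and \eqref{e2.1}, the $L^2(B_R)$-norm are controlled by the global $D^s$-data) followed by an appeal to the known compact embedding $H^s(\Omega)\hookrightarrow\hookrightarrow L^q(\Omega)$, $q<2_s^*$, on bounded Lipschitz domains from Di Nezza--Palatucci--Valdinoci \cite{Di-Nezza-Palatucci-Valdinoci-2012}. You instead prove compactness from scratch via Fr\'echet--Kolmogorov--Riesz, using the translation estimate $\|v(\cdot+h)-v\|_{L^2(\R^N)}^2\leq C|h|^{2s}[v]_s^2$ (which indeed follows from the Fourier representation of $[\,\cdot\,]_s$ and $|e^{ih\cdot\xi}-1|^2\leq 4\min(1,|h|^2|\xi|^2)\leq 4|h|^{2s}|\xi|^{2s}$, first for $\mathcal{C}_0^\infty$ and then by density on all of $D^s(\R^N)$), and you handle the full range $p\in[1,2_s^*)$ by H\"older and interpolation against \eqref{e2.1} exactly as one should; the endpoint discussion is also accurate. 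What your route buys is self-containedness and transparency about where the fractional regularity enters; what the citation route buys is brevity and the automatic treatment of the boundary behaviour, which is the one place where your write-up is too quick.

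Specifically, the remark that ``tightness is automatic since $B_R$ is bounded'' glosses over a real point. The Kolmogorov--Riesz criterion on a bounded domain is not just boundedness plus translation equicontinuity: extending the restrictions $u_n|_{B_R}$ by zero destroys the translation estimate near $\partial B_R$, so you must either apply the translation bound to the \emph{global} functions $u_n\in D^s(\R^N)$ together with a uniform boundary-layer (equi-integrability) condition, or localize with a cutoff $\varphi\in\mathcal{C}_0^\infty(B_{2R})$, $\varphi\equiv 1$ on $B_R$, and apply the whole-space criterion (e.g.\ \cite[Corollary 4.27]{Brezis-2011}) to $(\varphi u_n)$. Note also that $u_n$ is in general \emph{not} bounded in $L^2(\R^N)$ --- only in $L^{2_s^*}(\R^N)$ --- so the whole-space criterion cannot be invoked for $(u_n)$ itself; only the differences $u_n(\cdot+h)-u_n$ lie in $L^2(\R^N)$. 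Both repairs are immediate here, since by H\"older and \eqref{e2.1}
\begin{equation*}
    \int_E |u_n|^2\,dx\leq |E|^{1-\frac{2}{2_s^*}}\,\|u_n\|_{2_s^*}^2\leq C\,|E|^{1-\frac{2}{2_s^*}}
\end{equation*}
for every measurable $E\subset B_{2R}$, which gives the required uniform equi-integrability (in particular uniform smallness on boundary layers $B_R\setminus B_{R-\delta}$) and the $L^2(B_{2R})$-bound for the cutoff argument. With this point made explicit, your proof is complete.
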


Now, let $X$ be a real Banach space. We denote by $\mathcal{W}_X$ the class of all functionals $\Phi: X \to \R$ possessing the following property: if $(u_n)_{n \geq 1}$ is a sequence in $X$ converging weakly to $u\in X$ and $\liminf_{n\to \infty} \Phi(u_n)\leq\Phi(u)$, then $(u_n)_{n \geq 1}$ has a subsequence converging strongly to $u$.

The following abstract critical point theorem is due to Ricceri \cite[Theorem 2]{Ricceri-2009}.
\begin{theorem}\label{ThmRic}
    Let $X$ be a separable and reflexive real Banach space, $\Phi: X \to \R$ is a coercive, sequentially weakly lower semicontinuous $C^1$-functional belonging to $\mathcal{W}_X$ which is bounded on each bounded subset of $X$ and whose derivative admits a continuous inverse on $X^*$ and  $J: X \to \R $ is a $C^1$-functional with compact derivative. Assume that $\Phi$ has a strict local minimum $x_0$ with $\Phi (x_0)=J(x_0)=0$. Finally, set
    \begin{align*}
	& \alpha = \max \left\{0, \limsup_{\|x\|\to +\infty} \frac{J(x)}{\Phi (x)},\limsup_{x\to x_0} \frac{J(x)}{\Phi (x)}\right\},
	&\beta=\sup_{x\in \Phi^{-1}\left(\left]0, +\infty\right[\right)}\frac{J(x)}{\Phi (x)},
    \end{align*}
    and assume that $\alpha < \beta$. Then, for each compact interval $[a, b]\subset \left ]\frac{1}{\beta}, \frac {1}{\alpha}\right[$ (with the conventions $\frac{1}{0}=+\infty, \frac{1}{+\infty}=0$), there exists $r>0$ with the following property: for every $\lambda \in [a, b]$,  and every $C^1$-functional $\Psi: X \to \R $ with compact derivative, there exists $\sigma > 0$ such that for each $\mu \in [0, \sigma]$, the equation
    \begin{align*}
	\Phi^{'}(x) = \lambda J^{'}(x) + \mu \Psi^{'}(x)
    \end{align*}
    has at least three solutions whose norms are less than $r$.
\end{theorem}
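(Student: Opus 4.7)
The statement is a three-critical-points-with-perturbation result, and the plan is to proceed in two stages: first produce three critical points of $I_\lambda := \Phi - \lambda J$ at $\mu = 0$ for every $\lambda$ in the open interval $(1/\beta,1/\alpha)$, and then show that this configuration is stable under the small $C^1$-perturbation $-\mu\Psi$. Throughout, the gap $\alpha<\beta$ and the requirement that $[a,b]\subset(1/\beta,1/\alpha)$ encode the basic mountain-pass geometry.

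For the unperturbed functional, fix $\lambda\in[a,b]$. The bound $\lambda<1/\alpha$ combined with $\limsup_{\|x\|\to\infty}J(x)/\Phi(x)\le\alpha$ and the coercivity of $\Phi$ forces coercivity of $I_\lambda$. The bound $\limsup_{x\to x_0}J(x)/\Phi(x)\le\alpha<1/\lambda$, together with $\Phi(x_0)=J(x_0)=0$ and the fact that $x_0$ is a strict local minimum of $\Phi$, forces $x_0$ to be a strict local minimum of $I_\lambda$ at level zero. On the other hand, $\lambda>1/\beta$ produces some $\tilde x$ with $\Phi(\tilde x)>0$ and $J(\tilde x)/\Phi(\tilde x)>1/\lambda$, hence $I_\lambda(\tilde x)<0$. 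Invoking sequential weak lower semicontinuity, the $\mathcal W_X$-property of $\Phi$ and the coercivity, $I_\lambda$ attains its infimum at some $x_\lambda$ with $I_\lambda(x_\lambda)<0$; in particular $x_\lambda\neq x_0$, and $I_\lambda$ has two distinct local minimizers.

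Next I would verify a Palais--Smale condition for $I_\lambda$: if $(u_n)$ is a PS-sequence, coercivity gives boundedness and hence a weakly convergent subsequence $u_n\weak u$; then compactness of $J'$ and continuous invertibility of $\Phi'$ upgrade this to strong convergence via $u_n=(\Phi')^{-1}\bigl(I_\lambda'(u_n)+\lambda J'(u_n)\bigr)$. The mountain pass theorem applied along paths joining $x_0$ and $x_\lambda$ then yields a third critical point of $I_\lambda$. A uniform norm bound $r$ valid for all $\lambda\in[a,b]$ follows from the uniformity of the coercivity estimate over the compact interval, together with continuity of the mountain-pass level.

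The hardest step is the perturbation. Set $I_{\lambda,\mu}:=\Phi-\lambda J-\mu\Psi$. Since $\Psi'$ is compact, $\Psi$ is bounded on bounded sets, so for $\mu$ small the coercivity and PS-compactness of $I_{\lambda,\mu}$ persist; the strict local minimum near $x_0$ survives because $\mu\Psi$ is a small $C^1$-bump; and the point $\tilde x$ still produces a negative value. Re-running the two-minima-plus-mountain-pass argument yields three critical points of $I_{\lambda,\mu}$, that is, three solutions of $\Phi'(x)=\lambda J'(x)+\mu\Psi'(x)$ with norms below the same $r$. The genuine obstacle is quantifying the smallness of $\mu$ uniformly in $\lambda\in[a,b]$: one must show that the mountain-pass level $c_\lambda$ is bounded away from $\max\{0,I_\lambda(x_\lambda)\}$ by a quantity depending continuously on $\lambda$ on the compact interval, so that a single threshold $\sigma>0$ can be extracted.
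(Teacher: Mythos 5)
First, a point of comparison: the paper does not prove this statement at all --- it is quoted verbatim as Ricceri's three critical points theorem (\cite[Theorem 2]{Ricceri-2009}), so your attempt has to be measured against Ricceri's original argument. Your analysis of the unperturbed case $\mu=0$ is essentially correct and follows the standard route: $\lambda<1/\alpha$ yields coercivity of $I_\lambda=\Phi-\lambda J$ and makes $x_0$ a strict local minimum at level zero; $\lambda>1/\beta$ produces a point with $I_\lambda<0$; sequential weak lower semicontinuity of $\Phi$, compactness of $J'$ (which makes $J$ sequentially weakly continuous on the reflexive space $X$) and coercivity give a global minimizer $x_\lambda\neq x_0$; the Palais--Smale condition follows from $u_n=(\Phi')^{-1}\bigl(I_\lambda'(u_n)+\lambda J'(u_n)\bigr)$ as you say; and a Pucci--Serrin/mountain-pass argument gives the third critical point.

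The genuine gap is in the perturbation step, and it sits exactly where you wave your hands: the claim that ``the strict local minimum near $x_0$ survives because $\mu\Psi$ is a small $C^1$-bump'' is unjustified and, as stated, false in infinite dimensions. A strict local minimum carries no quantitative barrier: spheres are not compact, so one may have $\inf_{\|x-x_0\|=\rho}I_\lambda(x)=I_\lambda(x_0)$ for every small $\rho$, and strict local minima can then be destroyed by arbitrarily $C^1$-small perturbations. This is precisely where the hypotheses your sketch never invokes --- separability, reflexivity, and above all $\Phi\in\mathcal{W}_X$ --- do the real work in Ricceri's proof (via \cite{Ricceri-2009-b}): one minimizes $\Phi-\lambda J-\mu\Psi$ over small closed balls centered at the two minima (minimizers exist by weak compactness and weak lower semicontinuity), and the $\mathcal{W}_X$ property upgrades the weak convergence of these constrained minimizers as $\mu\to 0$ to strong convergence, forcing them into the interiors of the balls for small $\mu$; only then are they genuine local minima of the perturbed functional and the two-minima-plus-(PS) argument applies. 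Relatedly, the obstacle you single out --- uniformity of $\sigma$ in $\lambda$ --- is not actually demanded by the statement: $\sigma$ is allowed to depend on both $\lambda$ and $\Psi$. What must be uniform is the radius $r$, over all $\lambda\in[a,b]$ \emph{and all perturbations} $\Psi$; this comes from the coercivity of $\Phi-\lambda J$ being uniform on the compact interval together with the boundedness of $\Phi$ on bounded sets, after which $\sigma$ is chosen small (depending on $\lambda$ and $\Psi$) to keep the perturbed critical points inside the ball of radius $r$. So the proposal has the right architecture but leaves the theorem's central mechanism --- the $\mathcal{W}_X$-based stability of the two local minima --- unproved.
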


\section{A nonlinear eigenvalue problem}

In this section, we consider the following nonlinear eigenvalue problem with a parameter $\lambda>0$
\begin{equation}\label{e3.1}
    (-\Delta)^s u = \lambda w(x)|u|^{q-2}u \quad\text{in }{\R}^N,
\end{equation}
where $0<s<1, 2s< N, 2<q<2_s^*$ and $(-\Delta)^s$ is the fractional Laplace operator given in \eqref{e1.1}. We suppose the following assumptions on the indefinite weight function $w:\R^N\to\R$.
\begin{enumerate}
    \item[(W$_1$)]
	there exists an open subset $\Omega \neq \emptyset$ of $\R^N$ such that $w(x)>0$ a.e.\,in $\Omega$;
    \item[(W$_2$)]
	$w\in L^{\tau}\left(\R^N\right)$ with $\tau\in\displaystyle \left[\frac{2_s^*}{2_s^*-q}, \frac{\nu}{\nu-1}\frac{2_s^*}{2_s^*-q}\right]$ for some $q<\nu < 2_s^*=\frac{2N}{N-2s}$.
\end{enumerate}

\begin{definition}
    We say that $u \in D^s\left(\R^N\right)$ is a weak solution of \eqref{e3.1} if there exists  $ \lambda >0$ such that
    \begin{equation}\label{e3.2}
    \iint\limits_{\R^{2N}}\frac{[u(x)-u(y)][\varphi(x)-\varphi(y)]}{|x-y|^{N+2s}}dxdy=\lambda \int\limits_{\R^N} w(x)|u|^{q-2}u\varphi\,dx,
    \end{equation}
    for all $\varphi \in D^s\left(\R^N\right)$.
\end{definition}

Our first result concerns the existence of at least one solution to problem \eqref{e3.1}.

\begin{theorem}\label{Thm1}
    Suppose that $ 0<s <1$, $2s<N$, $2<q<2_s^*$, (W$_1$) and (W$_2$) are satisfied. Then \eqref{e3.1} possesses at least one nontrivial weak solution  $ u\in D^s\left(\R^N\right)$ with
    \begin{align*}
	  \lambda = \frac {q \left( [u ]_s^2 +  [u ]_s^{2_s^*}\right)}{\left(2 +  2_s^*[ u]_s^{2_s^*-2}\right) \displaystyle\int_{\R^N} w(x)|u|^{q} \,dx}.
    \end{align*}
\end{theorem}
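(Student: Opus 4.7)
The explicit shape of $\lambda$ in the statement is exactly the Lagrange multiplier that arises from a Rayleigh-type quotient, so I propose to set
\begin{align*}
    \Phi(u):=[u]_s^2+[u]_s^{2_s^*},\qquad J(u):=\int_{\R^N}w(x)|u|^q\,dx,
\end{align*}
and look for a minimizer of $R(u):=\Phi(u)/J(u)$ over the open set $\mathcal{A}:=\{u\in D^s(\R^N):J(u)>0\}$. Hypothesis (W$_1$) makes $\mathcal{A}\neq\emptyset$ via a nonnegative bump supported in $\Omega$. H\"older with the pair $(2_s^*/q,\,2_s^*/(2_s^*-q))$ together with \eqref{e2.1} gives $J(u)\leq C\|w\|_\tau[u]_s^q$, whence
\begin{align*}
    R(u)\geq C^{-1}\bigl([u]_s^{2-q}+[u]_s^{2_s^*-q}\bigr),
\end{align*}
which blows up both as $[u]_s\to 0$ and as $[u]_s\to\infty$ since $2<q<2_s^*$. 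Consequently $\Lambda:=\inf_{\mathcal{A}} R>0$, and every minimizing sequence $(u_n)$ is forced into an annulus $0<c_1\leq[u_n]_s\leq c_2$; by reflexivity of $D^s(\R^N)$ I extract, after a subsequence, $u_n\weak u$.

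The core of the proof is the convergence $J(u_n)\to J(u)$, which I plan to establish by a ball-plus-tail splitting $\R^N=B_R\cup(\R^N\setminus B_R)$. On the ball, Lemma \ref{Lemma3} yields strong convergence $u_n\to u$ in $L^p(B_R)$ for some $p<2_s^*$, hence a.e.\ after a further subsequence, while H\"older and absolute continuity of $\|w\|_\tau$ deliver uniform integrability of $\{w|u_n|^q\}$ through
\begin{align*}
    \int_A w|u_n|^q\,dx\leq\|w\chi_A\|_\tau\|u_n\|_{2_s^*}^q\leq C\|w\chi_A\|_\tau.
\end{align*}
Vitali's theorem then produces $\int_{B_R}w|u_n|^q\to\int_{B_R}w|u|^q$, and the same H\"older bound combined with $\|w\chi_{\R^N\setminus B_R}\|_\tau\to 0$ as $R\to\infty$ controls the tail uniformly in $n$. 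Therefore $J(u_n)\to J(u)>0$, and by weak lower semicontinuity of $[\cdot]_s$ we get $\Phi(u)\leq\liminf\Phi(u_n)$, so $R(u)\leq\Lambda$ and $u$ is a minimizer.

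Finally, the Euler--Lagrange identity $\Phi'(u)J(u)=\Phi(u)J'(u)$ for $R$, once one computes
\begin{align*}
    \Phi'(u)\varphi &= \bigl(2+2_s^*[u]_s^{2_s^*-2}\bigr)\iint_{\R^{2N}}\frac{(u(x)-u(y))(\varphi(x)-\varphi(y))}{|x-y|^{N+2s}}\,dx\,dy,\\
    J'(u)\varphi &= q\int_{\R^N}w(x)|u|^{q-2}u\varphi\,dx,
\end{align*}
is precisely the weak formulation \eqref{e3.2} with $\lambda=q\,\Phi(u)\big/\bigl[(2+2_s^*[u]_s^{2_s^*-2})J(u)\bigr]$, matching the statement. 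The main obstacle is the middle compactness step: because the H\"older pairing naturally saturates at the critical Sobolev exponent $2_s^*$, compact embedding on $\R^N$ is not directly available, and I must combine the local compactness of Lemma \ref{Lemma3} with uniform integrability and tail decay, both supplied precisely by the integrability condition (W$_2$).
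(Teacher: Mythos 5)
Your proposal is correct, and structurally it is the paper's own argument in reciprocal form: the paper maximizes $J(u)/\Phi(u)$ over $D^s(\R^N)\setminus\{0\}$, you minimize $\Phi(u)/J(u)$ over $\{J>0\}$, and in both cases the bound $J(u)\leq C\|w\|_{\frac{2_s^*}{2_s^*-q}}[u]_s^q$ together with $2<q<2_s^*$ traps the extremizing sequence in an annulus $0<\Lambda_1\leq[u_k]_s\leq\Lambda_2$, the tail of $\int w|u_k|^q$ is controlled by the same H\"older estimate, and the Euler--Lagrange computation yields the identical $\lambda$ of \eqref{e3.6}. The one substantive divergence is the local compactness step on $B_{R_\varepsilon}$: the paper applies Young's inequality to $h(u)=\bigl(w|u|^{q-1}\bigr)^{\frac{\nu}{\nu-1}}$ and invokes continuity of the associated Nemytskii operator from $L^{\frac{\nu(q-1)}{\nu-1}\frac{2_s^*}{q}}(B_\varepsilon)$ into $L^1(B_\varepsilon)$, which is precisely where the upper endpoint $\frac{\nu}{\nu-1}\frac{2_s^*}{2_s^*-q}$ of (W$_2$) and the auxiliary exponent $\nu$ enter; you instead combine a.e.\ convergence (from Lemma \ref{Lemma3}) with uniform integrability of $\{w|u_n|^q\}$, obtained from absolute continuity of $\int|w|^{\frac{2_s^*}{2_s^*-q}}$, and conclude by Vitali's theorem. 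Your route is more elementary and uses only the lower endpoint $w\in L^{\frac{2_s^*}{2_s^*-q}}(\R^N)$, so it shows the exponent $\nu$ plays no essential role in the existence proof, whereas the paper's Nemytskii argument genuinely consumes the stronger integrability. Two minor remarks: your insistence on a nonnegative bump supported in $\Omega$ to get $\mathcal{A}\neq\emptyset$ is actually more careful than the paper, which takes $\varphi_0\in\mathcal{C}_0^\infty(\R^N)$ with $\sup\varphi_0>0$ without localizing in the set where $w>0$; and the paper additionally replaces the maximizing sequence by $u_k^+$ via the claim $J(u^+)\geq J(u)$, a normalization your argument correctly identifies as unnecessary for the stated conclusion. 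When writing it up, do make explicit the diagonal extraction over an exhaustion $R=1,2,\dots$ for the a.e.\ convergence, and note that $J(u)>0$ (hence $u\in\mathcal{A}$, $u\not\equiv 0$) follows from $J(u_n)=\Phi(u_n)/R(u_n)\geq\Lambda_1^2/(\Lambda+1)$ for large $n$ --- both are routine and present implicitly in your sketch.
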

\begin{proof}
    Let us consider the functional $J:D^s\left(\R^N\right)\setminus \{0\} \to \R$  defined by
    \begin{align*}
    J(u) = \frac {\displaystyle\int_{\R^N} w(x)|u|^{q}\,dx}{\displaystyle[u]_s^2 + [u]_s^{2_s^*}},
    \end{align*}
    which is well defined and bounded. Indeed, in view of (W$_2$) and \eqref{e2.1}, we get
    \begin{align*}
	\int_{\R^N} w(x)|u|^{q}\,dx  \leq \| w \|_{\frac{2_s^*}{2_s^*-q}} \| u \|_{2_s^*}^q \leq  c_1 [u]_{s}^{q},
    \end{align*}
    where $ c_1 = C_{2_s^*}^q\| w \|_{\frac{2_s^*}{2_s^*-q}} < \infty $. On the other hand, since $2 < q <2_s^*$ we have
    \begin{align*}
	[u]_{s}^{q} \leq  [u]_{s}^{2} + [u]_{s}^{2_s^*} \quad \text{for all }u\in D^s\left(\R^N\right).
    \end{align*}
    Hence, it follows that
    \begin{align*}
	J(u) \leq c_1\quad\text{for all }u\in D^s\left(\R^N\right)\setminus \{0\}.
    \end{align*}

    Now let
    \begin{align*}
	\mathcal{S}: =\sup_{0 \neq u\in D^s\left(\R^N\right)}J(u).
    \end{align*}
    We choose $ \varphi_0 \in \mathcal{C} _0^\infty\left(\R^N\right) $ such that
    \begin{align*}
	\sup _{x\in {\R}^N} \varphi _0 (x) > 0,
    \end{align*}
    which gives $\mathcal{S} > \mathcal{S} _0$ with $\mathcal{S} _0 = \frac {1}{2}J(\varphi _0) > 0$. Let  $ (u _k)_{k \geq 1} \subset D^s\left(\R^N\right) $ with $ u _k\neq 0 $ be  such that
    \begin{align*}
	J(u _k) \to \mathcal{S} \quad \text{as }k\to +\infty.
    \end{align*}
    Since $ \mathcal{S} > S _0 $ and $  J(u^+) \geq J(u), $ one can choose
    \begin{align*}
	u _k \geq 0 \quad\text{and}\quad J(u _k) \geq S _0 \quad \text{for all } k \in \N.
    \end{align*}
    Then
    \begin{align*}
	S _0 \left( [u _k]_{s}^{2} +  [u _k]_{s}^{2_s^*}\right) \leq c_1 [u _k ]_{s}^{q} \quad\text{for all } k \in \N.
    \end{align*}
    Hence there exist constants $ 0 < \Lambda_1 < \Lambda_2 < \infty $ such that
    \begin{align*}
	\Lambda_1 \leq [u _k ]_{s} \leq  \Lambda_2
    \end{align*}
    holds for all $ k \in {\N}$. Then, one can find a subsequence still denoted by  $ (u _k)_{k \geq 1} $ and an element  $u \in D^s\left(\R^N\right) $ such that
    \begin{align*}
	u_k \weak u \quad\text{in }D^s\left(\R^N\right).
    \end{align*}
    Taking Lemma \ref{Lemma3} into account, we obtain that
    \begin{align*}
	D^s\left(B_R\right) \hookrightarrow L^p(B_R)
    \end{align*}
    is compact which implies that
    \begin{align*}
	u_k \to u \quad\text{in }  L^p\left(B_R\right)\quad \text{for all }p\in \left[1, 2_s^*\right).
    \end{align*}
    Since $\R^N = \cup_{R>0}B_R$, we deduce that $u \geq 0$ a.e.\,in $\R^N$. By using H\"older's inequality and \eqref{e2.1}, we have for all $ R>0 $ and  $ k \in {\N}$,
    \begin{align*}
	\left|\int _{|x| \geq R}w|u_k(x)|^q dx\right|
	& \leq  \left(\int _{|x| \geq R} |w(x)|^{\frac{2_s^*}{2_s^*-q}}\,dx\right)^{\frac {2_s^*-q}{2_s^*}}\left(\int _{|x| \geq R}|u _k|^{2_s^*}\,dx\right)^{\frac {q}{2_s^*}}\\
	& \leq  C_{2_s^*}^{q}\left(\int _{|x|\geq R} |w(x)|^{\frac{2_s^*}{2_s^*-q}}\,dx\right)^{\frac {2_s^*-2s}{2_s^*}} [u _k ]_s^{q}\\
	& \leq  c_2 \left(\int _{|x| \geq R} |w(x)|^{\frac{2_s^*}{2_s^*-q}}\,dx\right)^{\frac {2_s^*-q}{2_s^*}},
    \end{align*}
    where  $c_2$ is a positive constant independent of $R$ and $k$. In addition, we have
    \begin{align*}
	\left|\int _{|x| \geq R}w|u(x)|^q  \,dx\right|\leq   c_3  \left(\int _{|x| \geq R} |w(x)|^{\frac{2_s^*}{2_s^*-q}}\,dx\right)^{\frac {2_s^*-q}{2_s^*}},
    \end{align*}
    for some constant $c_3>0$ independent of $R$. This implies that for each $ \varepsilon > 0$ there exists $ R _\varepsilon > 0 $ such that
    \begin{equation} \label{e3.3}
	\left|\int _{|x| \geq R_\varepsilon}w|u_k(x)|^q\,dx\right| \leq \varepsilon \quad \text{and}\quad \left|\int _{|x| \geq R_\varepsilon}w|u(x)|^q\,dx\right| \leq  \varepsilon,
    \end{equation}
    hold for all $k \in \N$.

    On the other hand, applying Young's inequality, we get
    \begin{align*}
	\left(w|t|^{q-1}\right)^{\frac{\nu}{\nu-1}}
	& =  w(x)^\frac{\nu}{\nu-1}|t|^{\frac{\nu(q-1)}{\nu-1}}\\
	& \leq  \frac{2_s^*-q}{2_s^*} w(x)^{\frac{\nu}{\nu-1}\left(\frac{2_s^*}{2_s^*-q}\right)}+\frac{q}{2_s^*}|t|^{\frac{\nu(q-1)}{\nu-1}\frac{2_s^*}{q}},
    \end{align*}
    for a.a.\,$x \in B_\varepsilon = \{x\in\R^n: |x|<R_\varepsilon\}$ and for all $t\in \R$. Let us remark that $$\frac{\nu(q-1)}{\nu-1}\frac{2_s^*}{q}<2_s^*,$$
    since $q<\nu$. Hence the Nemytskii operator $\mathcal{N}_h$ associated with $h(u)=(w|u|^{q-1})^{\frac{\nu}{\nu-1}}$ is continuous from $L^{\frac{\nu(q-1)}{\nu-1}\frac{2_s^*}{q}}(B_\varepsilon)$ in $L^1(B_\varepsilon)$. Then we conclude that
    \begin{align*}
	\int_{|x|<R_\varepsilon} w(x)^{\frac{\nu}{\nu-1}}|u_k|^{\frac{\nu(q-1)}{\nu-1}} \,dx \to \int_{|x|<R_\varepsilon} w(x)^{\frac{\nu}{\nu-1}}|u|^{\frac{\nu(q-1)}{\nu-1}}\,dx,
    \end{align*}
    as $k\to \infty$. Thus, it follows that $w(x)|u_k|^{q-1}$ converges to $w(x)|u|^{q-1}$ in $L^{\frac{\nu}{\nu-1}}(B_\varepsilon)$. Hence since $L^{2_s^*}(B_\varepsilon)\subset L^\nu(B_\varepsilon)$, we have $w(x)|u_k|^{q-1}|u_k|$ converges to $w(x)|u|^{q-1}|u|$ in $L^1(B_\varepsilon)$, that is
    \begin{equation}\label{e3.4}
	\int_{|x|<R_\varepsilon} (w(x)|u_k|^{q-1}|u_k|-w(x)|u|^{q-1}|u|) \,dx \to 0.
    \end{equation}
    Finally, in view of \eqref{e3.3} and \eqref{e3.4}, we get
    \begin{equation}\label{e3.5}
	\int_{\R^N} w(x)|u_k|^{q} \,dx \to \int_{\R^N} w(x)|u|^{q} \,dx.
    \end{equation}
    Now, since $ J(u _k) \geq  S _0 $ we have
    \begin{align*}
	\int_{\R^N} w(x)|u_k|^{q}\,dx
	& \geq  S _0 ( [u _k ]_{s}^{2} +  [u _k ]_{s}^{2_s^*})\\
	& \geq  S _0(\Lambda_1^2 + \Lambda_1^{2_s^*} )  >0.
    \end{align*}
    It follows from (W$_1$) that $ 0< \displaystyle\int_{\R^N} w(x)|u|^{q}\,dx $, therefore $u\not \equiv 0 \;\mbox{in}\;\; \R^N$.

    By Corollary 3.9 of Brezis \cite{Brezis-2011}, from the weak lower semicontinuity of the norm in $D^s\left(\R^N\right)$ and in view of \eqref{e3.5}, we obtain
    \begin{align*}
	\mathcal{S}
	& = \lim \sup _{k \to \infty } J(u _k)\\
	& = \lim \sup _{k \to \infty }  \frac{ \displaystyle\int_{\R^N}  w|u_k|^q\,dx}{ [u _k]_s^2 +  [u _k]_s^{2_s^*}}\\
	& = \lim \sup _{k \to \infty }  \frac { \displaystyle\int_{\R^N}  w|u|^q\,dx}{ [u _k]_s^2 +  [u _k]_s^{2_s^*} }\\
	& \leq \frac {\displaystyle\int_{\R^N}  w|u|^q\,dx }{\displaystyle \lim \inf _{k \to \infty } ([u _k]_s^2 +  [u _k] _s^{2_s^*})}.
    \end{align*}
    Hence, we get
    \begin{eqnarray*}
	\mathcal{S} \leq \frac {\displaystyle\int_{\R^N} w|u|^q\,dx }{[u]_s^2 +  [u]_s^{2_s^*}} = J(u),
    \end{eqnarray*}
    and consequently, we have
    \begin{align*}
	J(u) =\mathcal{S}.
    \end{align*}

    Let $ \varphi$ a fixed element in $D^s\left(\R^N\right) $ and consider
    \begin{align*}
	\xi (\varepsilon ) = [u +\varepsilon \varphi]_s=\left(\ \iint\limits_{\R^{2N}}\frac{|u(x)-u(y)+\varepsilon (\varphi (x)-\varphi (y))|^2}{|x-y|^{N+2s}}dxdy\right)^{\frac{1}{2}}.
    \end{align*}
    Due to the fact that  $ u\neq 0$, and the continuity of $ \xi $, we can find $ \varepsilon _0 > 0 $ such that
    \begin{align*}
	[u +\varepsilon \varphi]_s > 0 \quad \text{for all } \varepsilon \in ]- \varepsilon _0, \varepsilon _0[.
    \end{align*}
    Define a function
    \begin{align*}
    \eta (\varepsilon ) = J( u +\varepsilon \varphi )=\frac{\displaystyle\int_{\R^N}  w|u +\varepsilon \varphi|^q\,dx}{[u +\varepsilon \varphi]_s^2 +  [u +\varepsilon \varphi]_s^{2_s^*}}, \quad \varepsilon \in ]- \varepsilon _0 ,\varepsilon _0[.
    \end{align*}
    The functions $\displaystyle\int_{\R^N}  w|u +\varepsilon \varphi|^q\,dx,\; G(\varepsilon)=[ u +\varepsilon \varphi ]_s^2$ and $H(\varepsilon)=[ u +\varepsilon \varphi ]_s^{2_s^*}$ are differentiable with respect to $\varepsilon$, then we have
    \begin{align*}
	G'(\varepsilon) & =2\iint\limits_{\R^{2N}}\frac{[(u +\varepsilon \varphi)(x)-(u +\varepsilon \varphi)(y)][\varphi(x)-\varphi(y)]}{|x-y|^{N+2s}}dxdy,\\
	H'(\varepsilon) & =2_s^*[u]_s^{2_s^*-2}\iint\limits_{\R^{2N}}\frac{[(u +\varepsilon \varphi)(x)-(u +\varepsilon \varphi)(y)][\varphi(x)-\varphi(y)]}{|x-y|^{N+2s}}dxdy,\\
	\eta'(\varepsilon) & = \frac{q(G(\varepsilon) +  H(\varepsilon))\displaystyle\int_{\R^N}w|u +\varepsilon \varphi|^{q-2}(u +\varepsilon \varphi)\varphi\,dx}{(G(\varepsilon)+H(\varepsilon))^2},\\
	& \qquad -\frac{\left(G'(\varepsilon)+H'(\varepsilon)\right)\displaystyle\int_{\R^N} w|u +\varepsilon \varphi|^q\,dx}{(G(\varepsilon)+H(\varepsilon))^2}.
    \end{align*}
    Since zero is a global maximum of the function $ \eta $, one has $ \eta ^{\prime} (0) = 0. $ This implies that
    \begin{align*}
	q(G(0) +  H(0))\displaystyle\int_{\R^N}w|u|^{q-2}u\varphi\,dx-\left(G'(0)+H'(0)\right)\displaystyle\int_{\R^N} w|u|^q\,dx=0.
    \end{align*}
    Therefore,
    \begin{align*}
	G'(0)+H'(0)=\frac{q(G(0) +  H(0))}{\displaystyle\int_{\R^N} w|u|^q\,dx}\displaystyle\int_{\R^N}w|u|^{q-2}u\varphi\,dx.
    \end{align*}
    Finally, we get
    \begin{align*}
    \iint\limits_{\R^{2N}}\frac{[u(x)-u(y)][\varphi(x)-\varphi(y)]}{|x-y|^{N+2s}}dxdy=\lambda \int_{\R^N} w(x)|u|^{q-2}u\varphi\,dx
    \end{align*}
    for all $\varphi \in D^s\left(\R^N\right)$, where
    \begin{equation}\label{e3.6}
    \lambda = \frac {q ( [u ]_s^2 +  [u ]_s^{2_s^*})}{(2 +  2_s^*[ u]_s^{2_s^*-2}) \displaystyle\int_{\R^N} w(x)|u|^{q} \,dx}.
    \end{equation}
    This completes the proof.
\end{proof}

The next result gives, in particular, the boundedness of weak solutions to \eqref{e3.1}.

\begin{theorem} \label{Thm2}
    Under the assumptions of Theorem \ref{Thm1}, every weak solution $u \in D^s\left(\R^N\right)$ satisfies $ u\in  L^{\kappa}\left(\R^N\right)$ for any  $\kappa \in [ 2_s^*, \infty]$.
\end{theorem}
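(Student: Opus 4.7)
The claim for $\kappa = 2_s^*$ is immediate from the continuous embedding $D^s(\R^N)\hookrightarrow L^{2_s^*}(\R^N)$ underlying \eqref{e2.1}. Once an $L^\infty$-bound is in hand, all intermediate exponents $\kappa \in (2_s^*, \infty)$ follow by interpolating between $L^{2_s^*}$ and $L^\infty$. Thus the whole assertion reduces to proving $u \in L^\infty(\R^N)$, and my plan is to run a Moser-type iteration adapted to the nonlocal operator $(-\Delta)^s$, in the spirit of Dr\'abek-Kufner-Nicolosi.

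First, for each $\beta \geq 1$ and $M > 0$, I would introduce an odd, increasing, Lipschitz function $g_M : \R \to \R$ that coincides with $t|t|^{2(\beta-1)}$ on $[-M,M]$ and is extended linearly beyond, together with the associated primitive $G_M(t) = \int_0^t g_M'(s)^{1/2}\,ds$ from Lemma~\ref{Lemma2} with $p = 2$. Since $g_M$ is globally Lipschitz and $u \in D^s(\R^N)$, the function $\varphi := g_M(u)$ again lies in $D^s(\R^N)$ and is admissible in \eqref{e3.2}. Testing with $\varphi$ and invoking Lemma~\ref{Lemma2} on the left yields
\begin{equation*}
    [G_M(u)]_s^2 \;\leq\; \lambda \int_{\R^N} w(x)\,|u|^{q-2}u\,g_M(u)\,dx.
\end{equation*}
Applying \eqref{e2.1} to $G_M(u)$ and passing to the limit $M\to\infty$ via monotone convergence, I would arrive at the key Moser inequality
\begin{equation*}
    C(\beta)\,\|u\|_{2_s^* \beta}^{2\beta} \;\leq\; \lambda \int_{\R^N} w(x)\,|u(x)|^{\,q+2\beta-2}\,dx,
\end{equation*}
valid whenever the right-hand side is finite, where $C(\beta)$ behaves polynomially in $\beta$.

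Next, I would attack the right-hand side by H\"older's inequality using $w \in L^\tau(\R^N)$ from (W$_2$): if $u \in L^\kappa(\R^N)$ with $(q+2\beta-2)\tau/(\tau-1) \leq \kappa$, the right-hand side is bounded by $\|w\|_\tau \|u\|_\kappa^{\,q+2\beta-2}$. Choosing $\beta_n$ at each step so that this inequality is saturated defines a new exponent $\kappa_{n+1} = 2_s^*\beta_n$. The lower bound $\tau \geq 2_s^*/(2_s^*-q)$ in (W$_2$) ensures the recursion can be started at $\kappa_0 = 2_s^*$ with some $\beta_0 \geq 1$, while the upper bound $\tau \leq \frac{\nu}{\nu-1}\frac{2_s^*}{2_s^*-q}$ keeps the ratio $\kappa_{n+1}/\kappa_n$ strictly above $1$, so that $\kappa_n \to \infty$. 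A standard geometric-series bookkeeping on $\log \|u\|_{\kappa_n}$ (the Moser product trick) then delivers $\limsup_n \|u\|_{\kappa_n} < \infty$, whence $u \in L^\infty(\R^N)$.

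The main technical obstacle is twofold. First, admissibility of $g_M(u)$ as an element of $D^s(\R^N)$ requires a careful verification: the truncation cannot be handled by a simple localization as in the classical setting, and one must estimate the Gagliardo seminorm directly using the Lipschitz constant of $g_M$, which is of order $M^{2(\beta-1)}$. Second, as $\beta_n \to \infty$ the constants $C(\beta_n)$ deteriorate polynomially, so the Moser product must be carried out with the norms raised to the power $1/(2\beta_n)$, and its convergence hinges precisely on the geometric growth of $\kappa_n$ guaranteed by the upper bound in (W$_2$); this is the delicate point where the restricted range of $\tau$ assumed in the hypothesis is genuinely used.
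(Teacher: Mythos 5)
Your overall strategy is the same as the paper's: a Moser iteration built on Lemma \ref{Lemma2} with $p=2$, the Sobolev inequality \eqref{e2.1}, and truncated power test functions, with the intermediate exponents $\kappa\in(2_s^*,\infty)$ recovered at the end (the paper gets them from the uniform bound $\|u\|_{(k_n+1)2_s^*}\leq c_{10}\|u\|_{2_s^*}$, you get them by interpolation --- equivalent). One genuine and correct streamlining on your part: you test \eqref{e3.2} directly with the odd truncated power $g_M(u)$, so that Lemma \ref{Lemma2} applies with $a=u(x)$, $b=u(y)$ regardless of the sign of $u$. The paper instead proceeds in two steps: it first derives the Kato-type inequality \eqref{e3.7} for $|u|$ by testing with $\psi f_\varepsilon'(u)$, $f_\varepsilon(t)=(\varepsilon^2+t^2)^{1/2}$, and invoking Lemma \ref{Lemma1} plus Fatou, and only then tests the resulting inequality with $u_M^{2k+1}$. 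Your shortcut eliminates Lemma \ref{Lemma1} and the limit $\varepsilon\to 0$ entirely, and the admissibility of $g_M(u)$ is in fact routine from the global Lipschitz bound, $g_M(0)=0$, and the characterization of $D^s\left(\R^N\right)$ --- less delicate than you fear.

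The genuine gap is in the iteration bookkeeping, precisely at the point you flag as delicate. A small issue first: on $\R^N$ there are no inclusions between Lebesgue spaces, so your claim that the right-hand side is bounded by $\|w\|_\tau\|u\|_\kappa^{q+2\beta-2}$ whenever $(q+2\beta-2)\tau/(\tau-1)\leq\kappa$ is false except in the saturated case of equality; since you do saturate, you should state it as an equality from the start. The serious issue is your analysis of the resulting recursion. Saturation $(q+2\beta_n-2)\tau'=\kappa_n$ together with $\kappa_{n+1}=2_s^*\beta_n$ gives the affine map
\begin{equation*}
\kappa_{n+1}=\frac{2_s^*}{2}\left(\frac{\kappa_n}{\tau'}-(q-2)\right),
\end{equation*}
and at the lower endpoint $\tau=\frac{2_s^*}{2_s^*-q}$, i.e.\ $\tau'=\frac{2_s^*}{q}$, its fixed point is exactly $2_s^*$: starting from $\kappa_0=2_s^*$ you get $\beta_0=1$, $\kappa_n\equiv 2_s^*$, ratio $\kappa_{n+1}/\kappa_n=1$, and the scheme never leaves $L^{2_s^*}$. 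Your attribution of roles to the two bounds in (W$_2$) is therefore backwards: the cap $\tau\leq\frac{\nu}{\nu-1}\frac{2_s^*}{2_s^*-q}$ does nothing to keep the ratio above $1$ (larger $\tau$ only helps the iteration), and the lower bound alone cannot start a nontrivial recursion. What is genuinely needed --- and what the hypothesis supplies through $\nu$ --- is integrability of $w$ at an exponent \emph{strictly above} $\frac{2_s^*}{2_s^*-q}$. This is exactly how the paper proceeds: it applies H\"older with $\eta=\frac{\nu}{\nu-1}\frac{2_s^*}{2_s^*-q}$, for which $\eta'<\frac{2_s^*}{q}$ strictly, and a second H\"older step splits off the \emph{fixed} factor $\|u\|_{2_s^*}^{q-2}$, yielding the purely geometric recursion $\|u_M\|_{(k+1)2_s^*}\leq c_8^{\frac{1}{k+1}}\bigl[\cdots\bigr]^{\frac{1}{k+1}}\|u\|_{(k+1)r}$ with $r=\frac{2\cdot 2_s^*\eta'}{2_s^*-(q-2)\eta'}<2_s^*$ strictly. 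Your argument is repaired by the same choice: run your saturated H\"older with $\tau=\eta$ (both proofs of the paper read (W$_2$) as membership at both endpoints); then $\tau'<\frac{2_s^*}{q}$, the fixed point of the affine map drops strictly below $2_s^*$, $\kappa_0=2_s^*$ lies above it, and $\kappa_n\to\infty$ geometrically, after which your product bookkeeping with exponents $\frac{1}{2\beta_n}$ converges just as the paper's does.
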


\begin{proof}
    For every $0<\varepsilon \ll 1$, we define the smooth convex Lipschitz function
    \begin{align*}
	f_\varepsilon(t)= (\varepsilon^2+t^2)^{\frac{1}{2}}.
    \end{align*}
    Taking $\varphi = \psi f_\varepsilon'(u)$ as test function in \eqref{e3.2} with $\psi \in \mathcal{C}_0^{\infty}\left(\R^N\right)$ being a positive function gives
    \begin{align*}
    \iint\limits_{\R^{2N}}\frac{[u(x)-u(y)][\psi f_\varepsilon'(u)(x)-\psi f_\varepsilon'(u)(y)]}{|x-y|^{N+2s}}dxdy=\lambda \int_{\R^N} w(x)|u|^{q-2}u\psi f_\varepsilon'\,dx.
    \end{align*}
    Then using Lemma \ref{Lemma1} with $p=2$, we obtain
    \begin{align*}
	\iint\limits_{\R^{2N}}\frac{[f_\varepsilon(u(x))-f_\varepsilon(u(y))][\psi (x)-\psi (y)]}{|x-y|^{N+2s}}dxdy\leq \lambda \int_{\R^N} |w||u|^{q-1}\psi |f_\varepsilon'|\,dx.
    \end{align*}
    Since $f_\varepsilon$ converges to $f(t)=|t|$, and using the fact that $|f_\varepsilon'(t)|\leq 1$, then by Fatou's Lemma, we get
    \begin{equation}\label{e3.7}
	\iint\limits_{\R^{2N}}\frac{[|u(x)|-|u(y)|][\psi (x)-\psi (y)]}{|x-y|^{N+2s}}dxdy\leq \lambda \int_{\R^N} |w||u|^{q-1}\psi\,dx,
    \end{equation}
    for all $\psi \in \mathcal{C}_0^{\infty}\left(\R^N\right)$. By density, the last inequality remains true for all $\psi \in D^s\left(\R^N\right)$ with $\psi \geq 0$.

    For $ M>0, $ we define the cutoff function  $ u _M(x) = \inf \{u(x),M\}$ of $u$. Then, for $k > 0$ let us choose $ \psi = u _M^{2k + 1} $ as a test function in \eqref{e3.7}. Note that $ \psi  \in D^s\left(\R^N\right)\cap L ^\infty(\R^N)$. This yields
    \begin{align*}
	\iint\limits_{\R^{2N}}\frac{[|u(x)|-|u(y)|][u _M^{2k + 1}(x)-u _M^{2k + 1}(y)]}{|x-y|^{N+2s}}dxdy\leq \lambda \int_{\R^N} |w||u|^{q-1}u _M^{2k + 1}\,dx.
    \end{align*}
    Let $ g(t)=(\inf\{t, M\})^{2k+1}$ to have $G(t)=\int_0^t g'(\tau)^{\frac{1}{2}}\,d\tau, t\in \R$. Then in view of Lemma \ref{Lemma2} with $p=2$ we get
    \begin{align*}
	\frac{2k + 1}{(k+1)^2}\iint\limits_{\R^{2N}}\frac{|u _M^{k + 1}(x)-u _M^{k + 1}(y)|^2}{|x-y|^{N+2s}}dxdy\leq \lambda \int_{\R^N} |w||u|^{q-1}u _M^{2k + 1}\,dx.
    \end{align*}
    Now, in view of \eqref{e2.1}, we obtain for a positive constant $c_4$
    \begin{equation}\label{e3.8}
	\frac{c_4(2k + 1)}{(k+1)^2}\left(\int_{\R^N}u_M^{(k+1)2_s^*}\,dx\right)^{\frac{2}{2_s^*}}\leq \lambda \int_{\R^N} |w||u|^{q-1}u _M^{2k + 1}\,dx.
    \end{equation}
    On the other hand, using (W$_1$), (W$_2$) and H\"older's inequality we derive
    \begin{align*}
	\int_{\R^N} |w||u|^{q-1}u _M^{2k + 1}\,dx
	& \leq \int_{\R^N} |w||u|^{2(k + 1)+q-2}\,dx \\
	& \leq \left(\int_{\R^N} |w|^\eta\,dx\right)^{\frac{1}{\eta}}\left(\int_{\R^N} |u|^{\eta'2(k + 1)}|u|^{\eta'(q -2)}\,dx\right)^{\frac{1}{\eta'}}\\
	& \leq c_5\left(\int_{\R^N} |u|^{r(k+1)}\,dx\right)^{\frac{2}{r}}\left(\int_{\R^N} |u|^{(\frac{r}{2\eta'})'\eta'(q-2)}\,dx\right)^{\frac{1}{\eta'(\frac{r}{2\eta'})'}},
    \end{align*}
    where $c_5$ is a positive constant, $\eta=\frac{\nu}{\nu-1}\frac{2_s^*}{2_s^*-q}, \eta'$ its conjugate exponent and $r=\frac{22_s^*\eta'}{2_s^*-(q-2)\eta'}$. Let us remark that $1<r<2_s^*$ and $(\frac{r}{2\eta'})'\eta'(q-2)=2_s^*$. It follows that
    \begin{equation}\label{e3.9}
	\int_{\R^N} |w||u|^{q-1}u _M^{2k + 1}\,dx \leq c_6\left(\int_{\R^N} |u|^{r(k+1)}\,dx\right)^{\frac{2}{r}},
    \end{equation}
    for some positive constant $c_6$. Hence, combining \eqref{e3.8} and \eqref{e3.9}, there exists a constant $ c_7 > 0 $ independent of  $ M > 0 $ and $ k > 0 $ such that
    \begin{align*}
	\left(\int_{\R^N}u_M^{(k+1)2_s^*}\,dx\right)^{\frac{2}{2_s^*}}\leq c_7 \frac{(k+1)^2}{(2k + 1)}\left(\int_{\R^N} |u|^{r(k + 1)}\,dx\right)^{\frac{2}{r}},
    \end{align*}
    that is
    \begin{equation}\label{e3.10}
	\| u _M \| _{(k + 1)2_s^*} \leq c_8^\frac{1}{k + 1} \left[ \frac{k + 1}{(2k + 1)^{\frac{1}{2}}}\right] ^{\frac {1}{k + 1}}\| u \| _{(k + 1)r},
    \end{equation}
    with $ c_8 = c_7^\frac{1}{2}$.

    Since $ u\in D^s\left(\R^N\right)$, hence $ u\in L^{2_s^*}\left(\R^N\right)$, one can choose  $ k _1 $ in \eqref{e3.10} such that  $ (k _1  + 1 )r = 2_s^* $, that is $ k _1 = \frac{2_s^*}{r} - 1$. Thus
    \begin{align*}
	\| u _M\| _{(k _1 + 1)2_s^*} \leq c_8^\frac {1}{k _1 + 1} \left[\frac {k _1 + 1}{(2k _1 + 1)^\frac {1}{2}}\right]^\frac {1}{k _1 + 1} \| u \| _{2_s^*} \quad \text{for all }M > 0.
    \end{align*}
    Note that $ \lim _{M\to \infty} u _M(x) = u(x)$. Then, Fatou's lemma implies
    \begin{align*}
	\| u \| _{(k _1 + 1)2_s^*} \leq c_8^\frac {1}{k _1 + 1} \left[\frac {k _1 + 1}{(2k _1 + 1)^\frac {1}{2}}\right]^\frac {1}{k _1 + 1} \| u \| _{2_s^*}.
    \end{align*}
    Therefore  $ u\in L^{(k _1 + 1)2_s^*}\left(\R^N\right)$. By the same argument, one can choose $ k _2 $ in \eqref{e3.10} such that  $  (k _2  + 1 )r =\left (k _1  + 1 \right)2_s^* $, that is $ k _2 = \left(\frac{2_s^*}{r}\right)^2 - 1$ to have
    \begin{align*}
	\| u \| _{(k _2 + 1)2_s^*} \leq c_8^\frac {1}{k _2 + 1} \left[\frac {k _2 + 1}{(2k _2 + 1)^\frac {1}{2}}\right]^\frac {1}{k _2 + 1} \| u \| _{(k _1 + 1)2_s^*}.
    \end{align*}
    By iteration, we obtain  $ k _n = (\frac{2_s^*}{r})^n - 1 $ such that
    \begin{align*}
	\| u \| _{(k _n + 1)2_s^*} \leq c_8^\frac {1}{k _n + 1} \left[\frac {k _n + 1}{(2k _n + 1)^\frac {1}{2}}\right]^\frac {1}{k _n + 1} \| u \| _{(k _{n - 1} + 1)2_s^*} \quad \text{for all }n\in\N.
    \end{align*}
    It follows
    \begin{align*}
	\| u \| _{(k _n + 1)2_s^*} \leq c_8^{\sum _{i = 1}^n\frac {1}{k _i + 1}} \prod _{i = 1}^n \left[\frac {k _i + 1}{(2k _i + 1)^\frac {1}{2}}\right]^\frac {1}{k _i + 1} \| u \| _{2_s^*},
    \end{align*}
    or equivalently
    \begin{align*}
	\| u \| _{(k _n + 1)2_s^*} \leq c_8^{\sum _{i = 1}^n\frac {1}{k _i + 1}} \prod _{i = 1}^n \left[\left[\frac {k _i + 1}{(2k _i + 1)^\frac {1}{2}}\right]^\frac {1}{\sqrt{k _i + 1}}\right]^{\frac {1}{\sqrt{k _i + 1}}} \| u \| _{2_s^*}.
    \end{align*}
    Since
    \begin{align*}
	\left[\frac {z + 1}{(2z + 1)^\frac {1}{2}}\right]^\frac {1}{\sqrt{z + 1}} > 1 \quad \text{for all }z > 0 \quad \mbox{and} \quad \lim _{z\to \infty } \left[\frac {z + 1}{(2z + 1)^\frac {1}{2}}\right] ^\frac {1}{\sqrt{z + 1}} = 1,
    \end{align*}
    there exists a constant $ c_9 > 0 $ independent of $ n \in {\N} $ such that
    \begin{align*}
    \| u \| _{(k _n + 1)2_s^*} \leq c_8^{\sum _{i = 1}^n\frac {1}{k _i + 1}} c_9^{\sum _{i = 1}^n\frac {1} {\sqrt{k_i + 1}}}\| u \| _{2_s^*},
    \end{align*}
    where
    \begin{align*}
	\frac {1}{k _i + 1} = \left(\frac {r}{2_s^*}\right)^i, \quad \frac {1}{\sqrt {k _i + 1}} = \left(\sqrt  {\frac {r}{2_s^*}}\right)^i, \quad \frac {r}{2_s^*} < \sqrt {\frac {r}{2_s^*}} < 1.
    \end{align*}
    Hence, there exists a constant  $ c_{10} > 0 $  independent of  $ n \in {\N} $ such that
    \begin{equation}\label{e3.11}
	\| u \| _{(k _n + 1)2_s^*} \leq c_{10}\| u \| _{2_s^*}
    \end{equation}
    for all $n \in {\N}$. Passing to the limit as $n$ goes to infinity we get
    \begin{equation}\label{e3.12}
	\| u \| _\infty \leq c_{10}\| u \| _{2_s^*}.
    \end{equation}
    Therefore, due to \eqref{e3.11} and \eqref{e3.12}, we deduce that
    \begin{align*}
    u\in L^{\kappa}\left(\R^N\right) \quad \mbox{for all} \quad 2_s^* \leq \kappa \leq \infty.
    \end{align*}
\end{proof}

\begin{proposition}
    Let $u\in D^s\left(\R^N\right)\backslash \{0\}$ be a weak solution of problem \eqref{e3.1} corresponding to the parameter $\lambda$ given in \eqref{e3.6}. Then
    \begin{align*}
	[u]_s=\left(\frac{q-2}{2_s^*-q}\right)^{\frac{1}{2_s^*-2}}.
    \end{align*}
\end{proposition}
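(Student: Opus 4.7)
The plan is to test the weak formulation \eqref{e3.2} with the admissible choice $\varphi = u \in D^s\left(\R^N\right)$ itself. This immediately yields the identity
\begin{equation*}
[u]_s^2 = \iint_{\R^{2N}}\frac{|u(x)-u(y)|^2}{|x-y|^{N+2s}}\,dx\,dy = \lambda \int_{\R^N} w(x)|u|^q\,dx.
\end{equation*}

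Next I would substitute the explicit expression \eqref{e3.6} for $\lambda$ into this identity. This gives
\begin{equation*}
[u]_s^2 = \frac{q\bigl([u]_s^2 + [u]_s^{2_s^*}\bigr)}{2 + 2_s^*[u]_s^{2_s^*-2}},
\end{equation*}
where I have used that $\int_{\R^N} w(x)|u|^q\,dx \neq 0$ (this positivity was established in the proof of Theorem \ref{Thm1}, as the non-triviality of $u$ together with (W$_1$) forces this integral to be strictly positive, and in any case $u \neq 0$ combined with the identity above makes the cancellation legitimate).

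Setting $t := [u]_s > 0$ and clearing denominators, the relation reduces to the purely algebraic equation
\begin{equation*}
t^2\bigl(2 + 2_s^* t^{2_s^*-2}\bigr) = q\bigl(t^2 + t^{2_s^*}\bigr),
\end{equation*}
which after rearrangement becomes $(2_s^* - q)\, t^{2_s^*} = (q-2)\, t^2$. Since $2 < q < 2_s^*$ both coefficients are strictly positive, so dividing by $t^2$ (permissible since $u \neq 0$) and taking the $(2_s^* - 2)$-th root yields the claimed identity
\begin{equation*}
[u]_s = \left(\frac{q-2}{2_s^*-q}\right)^{\frac{1}{2_s^*-2}}.
\end{equation*}
There is no real obstacle here; the only minor point requiring care is ensuring that $\int_{\R^N} w(x)|u|^q\,dx > 0$ so that the substitution of \eqref{e3.6} is legitimate, which is guaranteed by the construction of $u$ in Theorem \ref{Thm1}.
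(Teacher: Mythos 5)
Your proof is correct and follows exactly the paper's own argument: testing \eqref{e3.2} with $\varphi = u$, substituting the expression \eqref{e3.6} for $\lambda$, and solving the resulting algebraic equation $(2_s^*-q)\,t^{2_s^*}=(q-2)\,t^2$ for $t=[u]_s$. You even make explicit the one point the paper leaves implicit (the positivity of $\int_{\R^N} w(x)|u|^q\,dx$, which justifies the cancellation), so nothing further is needed.
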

\begin{proof}
    Let $u\in D^s\left(\R^N\right)\backslash \{0\}$ be a weak solution of problem \eqref{e3.1}, that is,
    \begin{align*}
	\iint\limits_{\R^{2N}}\frac{[u(x)-u(y)][\varphi(x)-\varphi(y)]}{|x-y|^{N+2s}}dxdy=\lambda \int_{\R^N} w(x)|u|^{q-2}u\varphi\,dx,
    \end{align*}
    for all $\varphi \in D^s\left(\R^N\right)$. Taking $\varphi = u$ it follows that
    \begin{align*}
	[u]_s^2
	& = \lambda \int_{\R^N} w(x)|u|^{q}\,dx \\
	& = \frac{q \left( [u ]_s^2 +  [u ]_s^{2_s^*}\right)}{\left(2 +  2_s^*[ u]_s^{2_s^*-2}\right)\int_{\R^N} w(x)|u|^{q} \,dx} \int_{\R^N} w(x)|u|^{q} \,dx. \\
    \end{align*}
    Hence, a simple calculation gives the result.
\end{proof}

\begin{remark}\label{Rem3.7}
    Consider the following more general class of nonlinear and nonlocal eigenvalue problems
    \begin{equation}\label{e3.13}
	\left(-\Delta_p\right)^s u = \lambda f(x, u) \quad\text{in}\quad {\R}^N,
    \end{equation}
    where
    \begin{align*}
	(-\Delta_p)^s u(x)=2\lim_{\varepsilon \to 0^+}\int_{\{y\in\R^N:|y-x|\geq \varepsilon\}}\frac{|u(x)-u(y)|^{p-2}(u(x)-u(y))}{|x-y|^{N+sp}}dy,
    \end{align*}
    is the fractional p-Laplacian, $0<\lambda$, $0<s<1$ and $sp< N$.
    Assume that $f$ is a Carath\'eodory function satisfying
    \begin{enumerate}
	\item [(H$_1$)]
	    There exists an open subset $\Omega \neq \emptyset$ of $\R^N$ such that $f(x,t)>0$ for a.a.\,$x\in\Omega$ and for all $t\in \R$;
	\item [(H$_2$)]
	    $|f(x,t)|\leq g(x)t^{q-1}$ for all $x\in \R^N $ and for all $t\geq 0$ with $p<q<p_s^*$, where $g$ is nonnegative function such that
	    \begin{align*}
		g\in L^{\frac{p_s^*}{p_s^*-q}}\left(\R^N\right)\cap L^{\frac{p_s^*}{p_s^*-q}+\delta}\left(\R^N\right)
	    \end{align*}
	    for some constant $ \delta \in [0,\infty)$ and $p_s^*=\frac{pN}{N-sp}$.
    \end{enumerate}
    To deal with the problem \eqref{e3.13}, we shall use the following function space
    \begin{align*}
	D^{s,p}\left(\R^N\right)=\left\{ u \in L^{p_s^*}\left(\R^N\right): \iint\limits_{\R^{2N}}\frac{|u(x)-u(y)|^p}{|x-y|^{N+sp}}dxdy<\infty \right\},
    \end{align*}
    which is a Banach space under the norm
    \begin{align*}
    [u]_{s,p}= \left(\ \iint\limits_{\R^{2N}}\frac{|u(x)-u(y)|^p}{|x-y|^{N+sp}}dxdy\right)^{\frac{1}{p}}.
    \end{align*}
    Here solutions of \eqref{e3.13} are always understood in the weak sense, that is,
    $u \in D^{s,p}\left(\R^N\right)$ is a weak solution of problem \eqref{e3.13} associated to the eigenvalue $\lambda >0$ if
    \begin{align*}
	\iint\limits_{\R^{2N}}\frac{|u(x)-u(y)|^{p-2}[u(x) - u(y)]}{|x-y|^{N+sp}}[\varphi(x)-\varphi(y)]dxdy= \lambda\int_{\R^N} f(x,u)\varphi\,dx,
    \end{align*}
    is satisfied for every $\varphi \in D^{s,p}\left(\R^N\right)$.

    One can prove that similar assertions as stated in Theorems \ref{Thm1} and \ref{Thm2} remain true concerning problem \eqref{e3.13} by the use of the same approach and arguments similar to those used in their proofs.
\end{remark}

\begin{remark}\label{Rem3.8}
    Another generalization of problem \eqref{e3.1} that is of interest can be given when $(-\Delta)^s$ is replaced by any nonlocal integro-differential operator
    \begin{align*}
    \mathcal{L_K} u(x)=-\frac{1}{2}\int_{\R^N} [u(x+y)+u(x-y)-2u(x)]K(y)\,dy \quad x\in \R^N,
    \end{align*}
    along any rapidly decaying function $u$ of class ${\mathcal{C}}^\infty\left(\R^N\right)$, where the positive weight $K: \R^N\backslash \{0\} \to \R^+$ satisfies the following properties:
    \begin{itemize}
	\item[(K$_1)$]
	    $\nu K \in L^1\left(\R^N\right)$ with $\nu(x)=\min(|x|^2, 1)$;
	\item[(K$_2)$]
	    there exists $\Gamma >0$ such that $K(x)\geq \Gamma |x|^{-(N+2s)}$ for any $x\in \R^N\backslash \{0\}$;
	\item[(K$_3)$]
	    $K(x)=K(-x)$ for any $x\in \R^N\backslash \{0\}$.
    \end{itemize}
    In particular, when the kernel $K$ is given by $K(x)= |x|^{-(N+2s)}$, $\mathcal{L_K}$ coincides with the fractional Laplace operator $(-\Delta)^s$.\\
    If $2s <N$ (where $s \in(0, 1)$), then under an adequate functional framework and suitable conditions on the right-hand side term, the assertion of Theorems \ref{Thm1} and \ref{Thm2} should be a generalization of previous results.
\end{remark}

\section{Nonlinear problems with two parameters}

We consider the following nonlinear problem with two parameters
\begin{equation} \label{e4.1}
    (-\Delta)^s u = \lambda f(x,u)+\mu g(x,u)   \quad \mbox{in } {\R}^N,
\end{equation}
where $ 0<s<1$, $2s< N$, $2< q<\min(r, 2_s^*)$, $\lambda, \mu>0$ are real parameters, $f,g: \R^N \times \R \to \R$ are two nonlinearities that have subcritical growth with respect to t and $(-\Delta)^s$ is the fractional Laplacian operator defined in \eqref{e1.1}. To be more precise, we assume that $f$ and $g$ are Carath\'eodory functions satisfying the following conditions.
\begin{enumerate}
    \item[(F)]
	$|f(x,t)| \leq m(x)|t|^{q-1}$ for a.a.\,$x\in \R^N$ and for all $t\in \R$, where $m$ is a positive function such that $m\in L^{\frac{2_s^*}{2_s^*-1}}\left(\R^N\right) \cap L^{\frac{2_s^*}{2_s^*-q}+\delta}\left(\R^N\right)$ with some $\delta>0$;
    \item[(G)]
	$|g(x,t)| \leq h(x)|t|^{r-1}$ for a.a.\,$x\in \R^N$ and for all $t\in \R$ for some positive $h$ such that $h\in L^{\frac{2_s^*}{2_s^*-r}}\left(\R^N\right)\cap L^{\frac{2_s^*}{2_s^*-r}+\delta}\left(\R^N\right)$ if $1<r<2_s^*$\\
	or $|g(x,t)| \leq h(x)|t|^{2_s^*-1}$ for a.a.\,$x\in \R^N$ and for all $t\in \R$ with $h\in L^\infty\left(\R^N\right)$ and $g(x,0)=0$ if $r=2_s^*$.
\end{enumerate}

Recall that a weak solution of problem \eqref{e4.1} is any $u \in D^s\left(\R^N\right)$ such that
\begin{equation}\label{e4.2}
    \iint\limits_{\R^{2N}}\frac{[u(x)-u(y)][\varphi(x)-\varphi(y)]}{|x-y|^{N+2s}}dxdy= \int_{\R^N} (\lambda f(x,u)+\mu g(x,u))\varphi\,dx,
\end{equation}
for all $\varphi \in D^s\left(\R^N\right)$.

Now we state our main result of this section about the multiplicity of solutions to problem \eqref{e4.1}.
\begin{theorem}\label{Thm4.1}
    Let $0<s<1$ with  $2s< N$ and assume (F), (G). Furthermore, suppose, for $1<\tau<2$ and some positive function $\rho \in L^{\frac{2_s^*}{2_s^*-\tau}}\left(\R^N\right)$, that
    \begin{equation} \label{e4.3}
	\limsup\limits_{|u|\to
    +\infty }\frac{F(x,u)}{\rho(x)|u|^\tau}\leq M<+\infty \quad \mbox{uniformly for a.a.\,}x\in {\R}^N
    \end{equation}
    and
    \begin{align*}
	\sup_{u\in D^{s}({\R}^N)}\int_{{\R}^N}F(x,u)dx>0
    \end{align*}
    where $F(x,t)=\int_0^tf(x,\xi)\,d\xi$. Set
    \begin{equation} \label{e4.4}
    \theta=\frac{1}{2}\inf\left\{\frac{\displaystyle\iint\limits_{\R^{2N}}\frac{|u(x)-u(y)|^2}{|x-y|^{N+2s}}dxdy}{\displaystyle\int_{{\R}^N}F(x,u)dx}: u\in D^{s}\left(\R^N\right), \int_{{\R}^N}F(x,u)dx>0\right\}.
    \end{equation}
    Then, for each compact interval $[a, b]\subset \left ]\theta, +\infty \right[$, there exists $r_1>0$ with the following property: for every $\lambda \in [a, b]$, there exists $\delta > 0$ such that for each $\mu \in [0, \delta]$, the equation \eqref{e4.1} has at least two nonzero weak solutions in $D^{s}\left(\R^N\right)$ whose norms are less than $r_1$.
\end{theorem}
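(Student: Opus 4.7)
The plan is to apply Ricceri's theorem (Theorem~\ref{ThmRic}) on the Hilbert space $X = D^s(\R^N)$ with the functionals
\begin{align*}
\Phi(u) = \tfrac{1}{2}[u]_s^2, \qquad J(u) = \int_{\R^N} F(x,u)\,dx, \qquad \Psi(u) = \int_{\R^N} G(x,u)\,dx,
\end{align*}
where $G(x,t) = \int_0^t g(x,\xi)\,d\xi$. The Euler--Lagrange equation $\Phi'(u) = \lambda J'(u) + \mu \Psi'(u)$ is then exactly the weak formulation \eqref{e4.2}. The abstract hypotheses on $\Phi$ are immediate from it being half of the squared Hilbert norm: coercivity, $C^1$-smoothness, boundedness on bounded subsets, sequential weak lower semicontinuity, and (by the Radon--Riesz property of any Hilbert space) membership in $\mathcal{W}_X$ all hold. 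Moreover, $\Phi'$ coincides with the Riesz isomorphism $X \to X^*$, whose inverse is linear and continuous, and $x_0 = 0$ is a strict global minimum of $\Phi$ with $\Phi(0) = J(0) = 0$.

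Next, I would verify that $J, \Psi \in C^1(X, \R)$ have compact derivatives. Their well-definedness and smoothness follow from (F), (G), H\"older's inequality, and \eqref{e2.1}. For compactness of $J'$, given $u_n \weak u$ I would split the pairing with any test function over a large ball $B_R$ and its complement. The tail is made uniformly small by absolute continuity of the integral together with the extra integrability $m \in L^{\frac{2_s^*}{2_s^*-q}+\delta}$, in the spirit of \eqref{e3.3}. On the ball, Lemma~\ref{Lemma3} gives the compact embedding $D^s(B_R) \hookrightarrow L^p(B_R)$ for $p < 2_s^*$, and continuity of the Nemytskii operator associated with $f$ (argued as around \eqref{e3.4}--\eqref{e3.5}) finishes the local convergence. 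The same scheme works for $\Psi'$ when $1 < r < 2_s^*$; in the borderline case $r = 2_s^*$, the condition $g(x, 0) = 0$ together with $h \in L^\infty$ supplies the missing equicontinuity.

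The quantitative core of the argument is the computation of $\alpha$ and $\beta$. Near $u = 0$, (F) yields $|F(x, u)| \leq \tfrac{m(x)}{q}|u|^q$, so H\"older and \eqref{e2.1} give $J(u) \leq C[u]_s^q$, and because $q > 2$ this means $J(u)/\Phi(u) = O\bigl([u]_s^{q-2}\bigr) \to 0$ as $u \to 0$. At infinity, \eqref{e4.3} provides $F(x, u) \leq (M+1)\rho(x)|u|^\tau$ once $|u| \geq T$, while for $|u| \leq T$ one has $|F(x, u)| \leq \tfrac{m(x)}{q} T^{q-\tau} |u|^\tau$. Since interpolation between the two conditions in (F) puts $m$ in $L^{\frac{2_s^*}{2_s^*-\tau}}$, one obtains $J(u) \leq C'[u]_s^\tau$, and $\tau < 2$ then gives $J(u)/\Phi(u) \to 0$ as $[u]_s \to \infty$. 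Hence $\alpha = 0$. By the definition of $\theta$ in \eqref{e4.4} one has $\beta = 1/\theta$, which is strictly positive since $\sup J > 0$; therefore $(1/\beta, 1/\alpha) = (\theta, +\infty)$, exactly the interval appearing in the statement.

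With all hypotheses checked, Theorem~\ref{ThmRic} produces $r_1 > 0$ and, for every $\lambda \in [a, b]$, a threshold $\delta > 0$ such that for each $\mu \in [0, \delta]$ the equation admits at least three weak solutions of norm less than $r_1$. The growth bounds in (F) and (G) force $f(x, 0) = g(x, 0) = 0$, so $u \equiv 0$ is always a solution of \eqref{e4.1}; thus at least two of the three solutions must be nonzero, as claimed. The main technical obstacle in carrying out this plan is the verification that $J'$ and $\Psi'$ are compact on the unbounded domain $\R^N$---the extra integrability conditions on $m$ and $h$ in (F) and (G) are tailored precisely for this step---while the rest reduces either to the Hilbert-space structure of $D^s(\R^N)$ or to the growth estimates above.
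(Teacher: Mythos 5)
Your proposal is correct and follows essentially the same route as the paper: Ricceri's Theorem \ref{ThmRic} applied with $\Phi(u)=\frac{1}{2}[u]_s^2$ and $J(u)=\int_{\R^N}F(x,u)\,dx$, compactness of $J'$ via the tail/ball splitting using the extra integrability of $m$ and Lemma \ref{Lemma3}, and the growth estimates yielding $\alpha=0$, $\beta=1/\theta$. Two of your micro-choices are in fact slight improvements over the paper's write-up: taking $\Psi(u)=\int_{\R^N}G(x,u)\,dx$ makes the Euler--Lagrange equation match \eqref{e4.1} exactly (the paper writes $\Psi(u)=\frac{1}{r}\int_{\R^N}h(x)|u|^{r}\,dx$, which corresponds to the perturbation $h(x)|u|^{r-2}u$ rather than $g$), and your explicit observation that $u\equiv 0$ always solves \eqref{e4.1}, so at least two of Ricceri's three solutions are nonzero, spells out a step the paper leaves implicit.
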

\begin{proof}
    The  functional $\Phi:D^{s}\left(\R^N\right) \to \R $ defined as
    \begin{align*}
    \Phi (u)= \frac{1}{2}\iint\limits_{\R^{2N}}\frac{|u(x)-u(y)|^2}{|x-y|^{N+2s}}dxdy
    \end{align*}
    is well defined and coercive. By Lemma 3.2 of Autuori-Pucci \cite{Autuori-Pucci-2013} we know that $\Phi$ is continuous, G\^ateaux differentiable and belongs to $C^1(\left(D^{s}\left(\R^N\right), \R\right)$ with
    \begin{align*}
	\langle \Phi'(u),\varphi\rangle= \iint\limits_{\R^{2N}}\frac{[u(x)-u(y)][\varphi(x)-\varphi(y)]}{|x-y|^{N+2s}}dxdy,
    \end{align*}
    for all $u, \varphi \in D^{s}\left(\R^N\right)$. Moreover, $\Phi$ is weakly lower semicontinuous by Corollary 3.9 of Brezis \cite{Brezis-2011} and it is clearly bounded on each bounded subset of $D^{s}\left(\R^N\right)$. Finally, by using Theorem 26.A of Zeidler \cite{Zeidler-1990}, we deduce that $\Phi'$ admits a continuous inverse on $\left(D^{s}\left(\R^N\right)\right)^*$. This implies that $\Phi \in W_{D^{s}\left(\R^N\right)}$. We define the functional $J:D^{s}\left(\R^N\right)\to \R$ by
    \begin{align*}
	J(u)=\int_{\R^N} F(x,u)\,dx.
    \end{align*}
    From (F) it follows that
    \begin{align*}
    |F(x,t)|\leq \frac{1}{q}m(x)|u|^{q} \quad \text{for a.a.\,}x\in\R^N \text{ and for all }t\in\R.
    \end{align*}
    Then, by standard argument, $F$ is in $C^1\left(\R^N\times \R\right)$, hence we see that $J$ is well defined and continuously G\^ateaux differentiable with
    \begin{align*}
	J^{'}(u) v = \int_{\R^N} f(x, u)v \,dx \quad \text{for all }u,v\in D^{s}\left(\R^N\right).
    \end{align*}

    Let us prove that $J'$ is a compact map from $D^{s}\left(\R^N\right)$ to $(D^{s}\left(\R^N\right))^*$.
    Indeed, let $(u_k)_{k \geq 1}$ be a sequence in $D^{s}\left(\R^N\right)$ which converges weakly to $u$. On one hand, in view of H\"older's inequality, \eqref{e2.1} and since $\frac{2_s^*}{2_s^*-1} < \frac{2_s^*}{2_s^*-q}$, we obtain
    \begin{align*}
	\int_{|x|\geq R}f(x,u)v\,dx\leq C_{2_s^*}^q\left(\int_{|x|\geq R}|m|^{\frac{2_s^*}{2_s^*-q}}\,dx\right)^{\frac{2_s^*-q}{2_s^*}}[u]_s^{q-1}[v]_s,
    \end{align*}
    for all $u, v\in D^{s}\left(\R^N\right)$ and for all $0\leq R\leq +\infty$. Since $(u_k)_{k \geq 1}$ is a bounded sequence, this implies for any $\varepsilon>0$ the existence of $R_\varepsilon >0$ such that
    \begin{equation} \label{e4.5}
	\int_{|x|\geq R_\varepsilon} f(x, u_k)v \,dx \leq \varepsilon \quad \text{and}\quad \int_{|x|\geq R_\varepsilon} f(x, u)v \,dx \leq \varepsilon
    \end{equation}
    holds for all $k$.

    On the other hand, let $\theta_1=1+\frac{2_s^*-q}{2_s^*}\delta$. Applying Young's inequality gives
    \begin{align*}
	|f(x,t)|^{\theta_1}
	& \leq m(x)^{\theta_1}|t|^{(q-1)\theta_1}\\
	& \leq \frac{2_s^*-q}{2_s^*} m(x)^{\theta_1 \frac{2_s^*}{2_s^*-q}}+\frac{q}{2_s^*}|t|^{(q-1)\theta_1\frac{2_s^*}{q}},
    \end{align*}
    for a.a.\,$x \in B_\varepsilon = \{x\in\R^N: |x|<R_\varepsilon\}$ and for all $t\in \R$. Note that
    \begin{align*}
	\theta_1 \frac{2_s^*}{2_s^*-q} = \frac{2_s^*}{2_s^*-q}+\delta,\quad (q-1)\theta_1\frac{2_s^*}{q}<2_s^* \quad \mbox{and}\quad \theta_1'<2_s^*
    \end{align*}
    for some $\delta>0$ such that $\frac{2_s^*}{(2_s^*-q)(2_s^*-1)}<\delta < \frac{2_s^*}{(2_s^*-q)(q-1)}$ with $\theta_1'$ being the conjugate exponent of $\theta_1$.\\
    Hence, in view of Lemma \ref{Lemma3}, we conclude that the Nemytskii operator $\mathcal{N}_{f^{\theta_1}}$ associated with $f^{\theta_1}$ is continuous from $L^{(q-1)\theta_1\frac{2_s^*}{q}}(B_\varepsilon)$ in $L^1(B_\varepsilon)$ and that
    \begin{align*}
    \int_{|x|<R_\varepsilon} f^{\theta_1}(x,u_k) \,dx \to \int_{|x|<R_\varepsilon} f^{\theta_1}(x,u) \,dx.
    \end{align*}
    This implies that $f^{\theta_1}(x,u_k)$ converges to $f^{\theta_1}(x,u)$ in $L^1(B_\varepsilon)$. Taking $L^{2_s^*}(B_\varepsilon)\subset L^{\theta_1'}(B_\varepsilon)$ into account, we get that $f(x,u_k)v$ converges to $f(x,u)v$ in $L^1(B_\varepsilon)$, that is
    \begin{equation}\label{e4.6}
	\int_{|x|<R_\varepsilon} (f(x,u_k)-f(x,u))v \,dx \to 0,
    \end{equation}
    for all $v\in D^s\left(\R^N\right)$. Finally, in view of \eqref{e4.5} and \eqref{e4.6}, there holds
    \begin{align*}
	\int_{\R^N} f(x, u_k)v \,dx \to \int_{\R^N} f(x, u)v \,dx.
    \end{align*}
    Therefore $J'$ is a compact operator.

    We claim that
    \begin{align*}
	\limsup_{u\to  0}\frac{J(u)}{\Phi (u)}\leq 0.
    \end{align*}
    For all $u\in D^{s}\left(\R^N\right)\backslash\{0\}$ we have
    \begin{align*}
	\frac{J(u)}{\Phi (u)}\leq \frac{2}{q}\frac{\displaystyle\int_{\R^N}m|u|^{q}\,dx}{[u]_s^2}\leq \frac{2}{q}\frac{\|m\|_\frac{2_s^*}{2_s^*-q}\|u\|_{2_s^*}^{q}}{[u]_s^2}.
    \end{align*}

    Then, using again \eqref{e2.1}, we obtain
    \begin{align*}
    \frac{J(u)}{\Phi (u)} \leq  \frac{2}{q}C_{2_s^*}^q\frac{\|m\|_\frac{2_s^*}{2_s^*-q}[u]_s^q}{[u]_s^2}.
    \end{align*}
    Consequently, since $2<q$ we infer for $\varepsilon > 0$ small enough
    \begin{equation} \label{e4.7}
	\limsup_{u\to  0}\frac{J(u)}{\Phi (u)}\leq \frac{2}{q}C_{2_s^*}^q\|m\|_\frac{2_s^*}{2_s^*-q}\varepsilon.
    \end{equation}

    Next, let us prove that
    \begin{align*}
	\limsup_{[u]_s\to  \infty}\frac{J(u)}{\Phi (u)}\leq 0.
    \end{align*}
    Indeed, using \eqref{e4.3}, there exists $A>0$ such that
    \begin{align*}
	|F(x,u)|\leq   M\rho(x)|u|^\tau \quad \text{for all } |u|>A,
    \end{align*}
    where $\tau<2$ and $M>0$. Then, using H\"older's inequality and \eqref{e2.1}, we obtain for each $u\in D^{s}\left(\R^N\right)\backslash\{0\}$
    \begin{align*}
	\frac{J(u)}{\Phi (u)}
	& \leq \frac {2\int_{\R^N (|u|> A)} F(x,u)\,dx}{[u]_s^2} + \frac {2\int_{\R^N (|u|\leq A)}F(x,u)\,dx}{[u]_s^2}\\
	& \leq \frac {2M\int_{\R^N (|u|> A)}\rho(x)|u|^\tau \,dx}{[u]_s^2} + \frac{2}{q} \frac{\int_{\R^N (|u|\leq A)}m(x)|u|^{q}dx}{[u]_s^2}\\
	& \leq \frac{2MC_{2_s^*}^\tau\|\rho \|_{L^{\frac{2_s^*}{2_s^*-\tau}}}[u]_s^\tau}{[u]_s^2} + \frac{2A^{q-1}}{q} \frac{\int_{\R^N (|u|\leq A)}m(x)|u|dx}{[u]_s^2}.
    \end{align*}
    Since $m\in L^{\frac{2_s^*}{2_s^*-1}}\left(\R^N\right)$ and by using \eqref{e2.1}, we have
    \begin{align*}
	\frac{J(u)}{\Phi (u)} \leq \frac{2MC_{2_s^*}^\tau\|\rho \|_{L^{\frac{2_s^*}{2_s^*-\tau}}}[u]_s^\tau}{[u]_s^2} + \frac{2A^{q-1}C_{2_s^*}}{q} \frac{\|m \|_{L^{\frac{2_s^*}{2_s^*-1}}}[u]_s}{[u]_s^2}.
    \end{align*}
    Consequently, this yields
    \begin{align*}
    \frac{J(u)}{\Phi (u)} \leq \frac{2MC_{2_s^*}^\tau\|\rho \|_{L^{\frac{2_s^*}{2_s^*-\tau}}}}{[u]_s^{2-\tau}} + \frac{2A^{q-1}C_{2_s^*}}{q} \frac{\|m \|_{L^{\frac{2_s^*}{2_s^*-1}}}}{[u]_s}.
    \end{align*}
    Therefore, since $\tau<2$ we obtain
    \begin{equation} \label{e4.8}
    \limsup_{[u]_s \to +\infty}\frac{J(u)}{\Phi (u)}\leq 2C_{2_s^*}\left(MC_{2_s^*}^{\tau-1}\|\rho\|_{L^{\frac{2_s^*}{2_s^*-\tau}}}+\frac{pA^{q-1}}{q}\|m\|_{L^\frac{p^*}{p^*-1}}\right)\varepsilon.
    \end{equation}
    Finally, in view of \eqref{e4.7} and \eqref{e4.8} and since $\eps>0$ is arbitrary chosen, that
    \begin{align*}
	\max \left\{\limsup_{[u]_s\to +\infty} \frac{J(u)}{\Phi (u)},\limsup_{u\to 0} \frac{J(u)}{\Phi (u)}\right\}\leq 0.
    \end{align*}
    Now the proof of Theorem \ref{Thm4.1} will be accomplished by applying Theorem \ref{ThmRic}. Indeed all the assumptions of Theorem \ref{ThmRic} are satisfied with $\alpha=0$ and
    $\beta=\frac{1}{\theta}$, where $\theta$ is as in \eqref{e4.4} and choose $[a,b]\subseteq ]\theta,+\infty[$.
    Moreover, the functional
    \begin{align*}
	\Psi (u)= \frac {1}{r}\int_{\R^N}h(x)|u|^{r}\,dx
    \end{align*}
    is well defined and continuously G\^ateaux differentiable in $D^s\left(\R^N\right)$ and one has
    \begin{align*}
	\Psi'(u)\varphi = \int_{\R^N}h(x)|u|^{r-2}u\varphi\,dx \quad \text{for all }u, \ph\in D^s\left(\R^N\right).
    \end{align*}
    As shown for $J$, we can prove in the same way that $\Psi'$ is compact. Therefore, there exists $r>0$ such that for every $\lambda\in[a,b]$ we can find $\sigma > 0$ verifying the following condition: for each $\mu \in [0, \sigma]$, the functional $\Phi - \lambda J - \mu \Psi$ has at least three critical points, which are precisely weak solutions of problem $\eqref{e4.1}$ whose norms are less than $r$. This yields the assertion of the theorem.
\end{proof}

\begin{proposition}
  Let the assumptions of Theorem \ref{Thm4.1} be satisfied. Then, for any solution $u$ of problem \eqref{e4.1} there holds $u\in L^\sigma\left(\R^N\right)$ for all $ 2_s^*\leq\sigma \leq \infty$.
\end{proposition}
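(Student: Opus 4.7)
The plan is to adapt the Moser iteration scheme of Theorem \ref{Thm2} to problem \eqref{e4.1}, which now features two source terms on the right-hand side. Starting from the weak formulation \eqref{e4.2}, I would first introduce the smooth convex Lipschitz approximation $f_\varepsilon(t)=(\varepsilon^2+t^2)^{1/2}$ of $|t|$, test with $\varphi=\psi f_\varepsilon'(u)$ for a nonnegative $\psi\in\mathcal{C}_0^\infty(\R^N)$, invoke Lemma \ref{Lemma1} with $p=2$, and let $\varepsilon\to 0$ via Fatou's lemma to get
\[\iint_{\R^{2N}}\frac{[|u(x)|-|u(y)|][\psi(x)-\psi(y)]}{|x-y|^{N+2s}}\,dx\,dy \leq \int_{\R^N}\bigl(\lambda|f(x,u)|+\mu|g(x,u)|\bigr)\psi\,dx,\]
extended by density to all nonnegative $\psi\in D^s(\R^N)$.

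For $M>0$ and $k>0$, setting $u_M=\min(|u|,M)$ and testing with $\psi=u_M^{2k+1}$, Lemma \ref{Lemma2} with $p=2$ combined with the Sobolev inequality \eqref{e2.1} leads to the Moser-type estimate
\[\frac{c_0(2k+1)}{(k+1)^2}\left(\int_{\R^N}u_M^{(k+1)2_s^*}\,dx\right)^{\!2/2_s^*}\! \leq \lambda\!\int_{\R^N}\!m|u|^{q-1}u_M^{2k+1}\,dx + \mu\!\int_{\R^N}\!|g(x,u)|\,u_M^{2k+1}\,dx.\]
For the first integral on the right-hand side, assumption (F) parallels condition (W$_2$) of Theorem \ref{Thm2}, the excess integrability parameter $\delta$ in (F) playing the role of the auxiliary exponent $\nu$ used there; hence the H\"older/Young chain of that theorem transfers word-for-word and yields a bound of the form $c\|u\|_{\overline{r}_1(k+1)}^{2(k+1)}$ with some $\overline{r}_1\in(1,2_s^*)$. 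In the subcritical alternative $1<r<2_s^*$ of (G), the $g$-integral is handled in exactly the same way with $(h,r)$ replacing $(m,q)$, giving $c\|u\|_{\overline{r}_2(k+1)}^{2(k+1)}$ for some $\overline{r}_2\in(1,2_s^*)$.

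The critical alternative $r=2_s^*$, $h\in L^\infty$ of (G) is the main obstacle: the H\"older/Young argument no longer reduces to a subcritical power of $u$. The remedy, in the spirit of a Brezis-Kato bootstrap, is to exploit that the critical coefficient $V(x):=g(x,u)/u$ satisfies $|V|\leq\|h\|_\infty|u|^{2_s^*-2}\in L^{N/(2s)}(\R^N)$, together with the absolute continuity of the $L^{2_s^*}$-integral of $u$. Splitting the integration over $\{|u|\leq L\}$ and $\{|u|>L\}$ and applying H\"older with exponents $2_s^*/(2_s^*-2)$ and $2_s^*/2$, one arrives at
\[\mu\!\int_{\R^N}\!h|u|^{2_s^*-1}u_M^{2k+1}\,dx \leq \mu\|h\|_\infty L^{2_s^*-2}\|u_M\|_{2(k+1)}^{2(k+1)} + \mu\|h\|_\infty\,\eta(L)\,\|u_M^{k+1}\|_{2_s^*}^{2},\]
with $\eta(L)=\bigl(\int_{|u|>L}|u|^{2_s^*}\,dx\bigr)^{(2_s^*-2)/2_s^*}\to 0$ as $L\to\infty$. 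A $k$-dependent choice of $L$ making $\eta(L)$ small enough to be absorbed into the left-hand side reduces the critical contribution to an estimate of the same subcritical shape as the $f$-term.

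Once a recursion of the form $\|u_M\|_{(k+1)2_s^*}\leq A^{1/(k+1)}\bigl[(k+1)/(2k+1)^{1/2}\bigr]^{1/(k+1)}\|u\|_{(k+1)\overline{r}}$ is in hand for some $\overline{r}\in(1,2_s^*)$, I would close the argument verbatim as in Theorem \ref{Thm2}: select $k_n$ with $(k_n+1)\overline{r}=(k_{n-1}+1)2_s^*$, let $M\to\infty$ by Fatou and $n\to\infty$, and exploit the convergence of $\sum(\overline{r}/2_s^*)^i$ and $\sum(\overline{r}/2_s^*)^{i/2}$ to reach $\|u\|_\infty\leq C\|u\|_{2_s^*}$. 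The intermediate $L^\sigma$-regularity for $2_s^*\leq\sigma<\infty$ then follows by interpolation between $L^{2_s^*}$ and $L^\infty$. The delicate technical point throughout is the absorption in the critical case, which relies on the smallness of the tail $\eta(L)$ and on careful bookkeeping uniform in the truncation parameter $M$.
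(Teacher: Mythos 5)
Your overall route is exactly the one the paper intends: the paper itself gives no detailed argument for this proposition, only the one-line remark that the Moser iteration of Theorem \ref{Thm2} adapts, with ``few details'' needed because of the condition on $h$. Your treatment of the $f$-term and of the subcritical alternative $1<r<2_s^*$ is correct: since the standing assumption $2<q<\min(r,2_s^*)$ forces $r>2$, the H\"older/Young chain of Theorem \ref{Thm2} does transfer with $(h,r)$ in place of $(w,q)$, and the excess exponent $\delta$ indeed plays the role of $\nu$. You also correctly identify the critical alternative $r=2_s^*$, $h\in L^\infty$ as the genuinely new point and propose the right idea for it (Brezis--Kato splitting over $\{|u|\le L\}$ and $\{|u|>L\}$ with absorption of the tail).

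However, the displayed estimate in your critical case is false as written, and the failure is in the step that matters. Both terms on its right-hand side rest on the bound $|u|\,u_M^{2k+1}\leq u_M^{2(k+1)}$, i.e.\ on $|u|\leq u_M$, which fails on the set $\{|u|>M\}$ (the inequality $u_M\leq|u|$ goes the other way). The first term is harmless: for fixed $L=L(k)$ one may restrict to $M\geq L$, and then $u_M=|u|$ on $\{|u|\leq L\}$. But the tail term is not: the correct H\"older bound on $\{|u|>L\}$ is
\begin{equation*}
\mu\|h\|_\infty\,\eta(L)\,\bigl\||u|\,u_M^{k}\bigr\|_{2_s^*}^{2},
\end{equation*}
and since $\||u|\,u_M^{k}\|_{2_s^*}\geq\|u_M^{k+1}\|_{2_s^*}$, this quantity \emph{dominates} what your left-hand side controls, so it cannot be absorbed: with the test function $\psi=u_M^{2k+1}$ the Sobolev inequality \eqref{e2.1} applied after Lemma \ref{Lemma2} only yields $\|u_M^{k+1}\|_{2_s^*}^2$ on the left, and subtracting a larger (though finite, since $\||u|u_M^k\|_{2_s^*}\leq M^k\|u\|_{2_s^*}$) term from a smaller one gives nothing. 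The standard repair is to change the test function to $\psi=|u|\,u_M^{2k}$, i.e.\ to apply Lemma \ref{Lemma2} with $g(t)=t\,(\min\{t,M\})^{2k}$ in the inequality of type \eqref{e3.7}: then $G(t)\geq c\,(k+1)^{-1}\,t\,(\min\{t,M\})^{k}$, so the left-hand side controls precisely $\||u|\,u_M^{k}\|_{2_s^*}^{2}$, the tail term absorbs for $L=L(k)$ large (with $\eta(L)$ depending only on $u$, not on $M$), and Fatou as $M\to\infty$ recovers $u\in L^{(k+1)2_s^*}(\R^N)$; the $L$-dependent constant only enters the recursion polynomially in $k$, so the telescoping product still converges. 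With this modification your iteration and the final interpolation close correctly.
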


The proof follows by adapting the same iteration scheme as in the proof of Theorem \ref{Thm2}. Few details, due the condition assumed on $h$, may be given to bound the maximal norm of solutions.\\

\begin{remark} ~
\begin{enumerate}[leftmargin=0.8cm]
    \item[(i)]
	Let us remark that in the proof of Theorem \ref{Thm4.1}, the choice of the values of the positive constant $\delta$ does not affect on the condition imposed on the function $f$.
    \item[(ii)]
	Problems like \eqref{e4.1} can be generalized for a more general class of fractional operators. Namely, by considering problems driven from the fractional p-Laplacian defined in Remark \ref{Rem3.7}, the assertion in Theorem \ref{Thm4.1} holds true under some suitable conditions and variational setting.
\end{enumerate}
\end{remark}

\subsection*{Acknowledgements}
The first author gratefully acknowledges the financial support of the German Academic Exchange Service (DAAD).

\end{document}